\documentclass[10pt]{amsart}
\usepackage{amsmath,amssymb,amsfonts,amsthm}
\usepackage{mathtools}
\usepackage{verbatim}
\usepackage{graphicx}
\usepackage{color}
\usepackage{enumerate}
\usepackage{tikz}
\usetikzlibrary{calc}
\usepackage[T1]{fontenc}
\usepackage{hyperref,cleveref}
\usepackage[labelfont=up]{subcaption}
\usepackage[margin=3cm]{geometry}
\usepackage[normalem]{ulem}

\numberwithin{equation}{section}
\numberwithin{figure}{section}

\theoremstyle{plain}
\newtheorem*{reftheorem*}{Theorem}
\newtheorem*{refcor*}{Corollary}
\newtheorem{thm}{Theorem}[section]
\newtheorem{lemma}[thm]{Lemma}
\newtheorem{prop}[thm]{Proposition}
\newtheorem{corollary}[thm]{Corollary}
\theoremstyle{definition}
\newtheorem{defn}[thm]{Definition}
\newtheorem{example}[thm]{Example}
\newtheorem{remark}[thm]{Remark}

\newtheorem{notation}[thm]{Notation}

\newcommand{\R}{\mathbb{R}}
\newcommand{\N}{\mathbb{N}}

\renewcommand{\H}{\mathbb{H}}
\newcommand{\Hy}{\mathbb{H}}

\newcommand{\K}{\mathbb{K}}
\newcommand{\C}{\mathbb{C}}
\newcommand{\bP}{\mathbb{P}}

\newcommand{\cH}{\mathcal{H}}

\newcommand{\cN}{\mathcal{N}}

\newcommand{\cS}{\mathcal{S}}

\newcommand{\p}{\partial}

\newcommand{\SO}{\operatorname{SO}}
\newcommand{\SU}{\operatorname{SU}}
\newcommand{\Sp}{\operatorname{Sp}}
\newcommand{\Spin}{\operatorname{Spin}}

\definecolor{amethyst}{rgb}{0.6, 0.4, 0.8}

\title[Hyperbolic spaces with geometric $\&$ geometrically finite quasi-actions are symmetric]{Hyperbolic spaces with geometric and geometrically finite quasi-actions are symmetric}

\author[D.Groves]{Daniel Groves}
\address{Department of Mathematics, Statistics, and Computer Science,
University of Illinois at Chicago,
322 Science and Engineering Offices (M/C 249),
851 S. Morgan St.,
Chicago, IL 60607-7045}
\email{dgroves@uic.edu}

\author[E. Stark]{Emily Stark}
\address{Department of Mathematics, Wesleyan University, 265 Church St., Middletown, CT 06459}
\email{estark@wesleyan.edu}

\author[G.S. Walsh]{Genevieve S. Walsh}
\address{Department of Mathematics, Tufts University, Joyce Cummings Center, Medford, MA 02155}
\email{genevieve.walsh@tufts.edu}

\author[K. Whyte]{Kevin Whyte}
\address{Department of Mathematics, Statistics, and Computer Science,
University of Illinois at Chicago,
322 Science and Engineering Offices (M/C 249),
851 S. Morgan St.,
Chicago, IL 60607-7045}
\email{kwhyte@uic.edu}

\date{\today}

\begin{document}

\begin{abstract}
   We prove that if a proper metric space is quasi-isometric to a finitely generated group and to a space with a horoball over a finitely generated group, then that space is quasi-isometric to a rank-one symmetric space or $\R$.
\end{abstract}

\maketitle

    \section{Introduction}

The isometry groups of negatively curved symmetric spaces correspond, up to index 2, with the rank--$1$ semi-simple Lie groups. A lattice in such a Lie group is a {\it uniform rank--$1$ lattice} if it acts cocompactly on the corresponding symmetric space and a {\it non-uniform rank--$1$ lattice} if it acts with finite co-volume and not cocompactly.  These exist in all dimensions $>1$ by \cite{BorelHS}, and the theory of these lattices has inspired the theory of hyperbolic and relatively hyperbolic groups.

A motivating example of a relatively hyperbolic group pair is a non-uniform rank--$1$ lattice and its parabolic subgroups. The cusped space for such a relatively hyperbolic group pair is quasi-isometric to any group acting geometrically on the associated symmetric space. We prove these spaces are the exceptions.

    \begin{thm} \label{thm:intro_main}
        Let $X'$ be a proper  metric space that is quasi-isometric to a finitely generated group $\Gamma$ and to a space with a horoball.  Then:

        \begin{enumerate}
        \item $X'$ is quasi-isometric to $\R$ or a (non-compact) rank-one symmetric space $X$,
        \item $\Gamma$ is virtually a rank--$1$ uniform lattice in the isometry group of $X$, and
        \item if the space with a horoball is quasi-isometric to the cusped space of a finitely generated relatively hyperbolic group pair $(H, \mathbb{P})$, then $H$ is virtually a non-uniform rank--$1$ lattice in the isometry group of~$X$.
        \end{enumerate}
    \end{thm}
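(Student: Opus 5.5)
We aim to reduce the statement to the classical quasi-isometric rigidity of rank-one symmetric spaces, by showing that the quasi-action of $\Gamma$ on the boundary becomes, after conjugation, a uniformly quasiconformal (or conformal) group action whose boundary is a sphere.

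\emph{Step 1: hyperbolicity and the boundary.} Since $X'$ is quasi-isometric to a space containing a horoball, and horoballs are Gromov hyperbolic in a uniform way, the first task is to see that $X'$ itself is Gromov hyperbolic. The hypothesis presumably means the ambient space with a horoball is hyperbolic (e.g.\ a cusped space). Then $\Gamma$ is a hyperbolic group quasi-acting geometrically on $X'$, and $\partial X'$ is a compact metrizable space on which $\Gamma$ acts as a uniform convergence group. If $\partial X'$ has two points, then $\Gamma$ is virtually $\mathbb{Z}$ and $X'$ is quasi-isometric to $\R$, so we assume $\partial X'$ is perfect.

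\emph{Step 2: use the horoball to get self-similarity at a point.} The horoball based at a point $\xi \in \partial X'$ supplies a one-parameter family of ``dilations'' fixing $\xi$. These are quasi-isometries of the horoball shifting height, and they extend to quasi-isometries of the whole space, at least coarsely near $\xi$. Combined with the cocompact quasi-action of $\Gamma$, this gives a quasi-action on $X'$ by a group $G$ that is no longer cocompact-only. The stabilizer of $\xi$ contains both parabolic-like elements (horospherical translations) and loxodromic-like elements (dilations). Concretely, the horosphere is quasi-isometric to $\partial X' \smallsetminus \{\xi\}$ with a visual parabolic metric, and the dilations act as uniform homotheties of this metric. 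With the horosphere quasi-isometric to a finitely generated group $P$, we get a quasi-action of $P \rtimes \mathbb{Z}$-type on the boundary minus a point, acting by uniformly quasi-similarities.

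\emph{Step 3: rigidity.} The main obstacle is upgrading this to ``$X'$ is quasi-isometric to a rank-one symmetric space.'' We would first try the following. The quasi-action on $\partial X'\smallsetminus\{\xi\}$ by quasi-similarities that is cocompact on pairs (coming from $\Gamma$) together with transitive-on-scales behavior (coming from the horoball) forces the boundary to be Ahlfors regular and Loewner. Equivalently, we would try to show that $\partial X'$ attains its conformal dimension, using the horoball to realize the self-similar structure. Then Bonk--Kleiner type results say that a hyperbolic group with Ahlfors $Q$-regular Loewner boundary equal to its attained conformal dimension quasi-acts conformally. Alternatively, one can recognize $P$ as a nilpotent group via polynomial growth of the horosphere (Gromov), so the horosphere is a Carnot-type space. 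Then Pansu/Heintze theory says the stabilizer structure is that of a negatively curved homogeneous space $N \rtimes \R$. Finally, a homogeneous negatively curved space admitting a cocompact quasi-action by $\Gamma$ must be symmetric, by results on quasi-isometries of Heintze groups (Xie, Carrasco Piaggio, and Eskin--Fisher--Whyte style rigidity): non-symmetric Heintze groups have quasi-isometries all fixing a boundary point, contradicting the cocompact $\Gamma$-action with no global fixed point. This gives (1).

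\emph{Step 4: conclusions (2) and (3).} Once $X'\simeq_{QI} X$ is rank-one symmetric, (2) follows from quasi-isometric rigidity of uniform lattices (Tukia, Pansu, Chow, Gabai, Casson--Jungreis, Sullivan for $\mathbb{H}^2$ and $\mathbb{H}^n$, and Pansu for the other rank-one spaces). For (3), the cusped space of $(H,\mathbb{P})$ is quasi-isometric to $X$, so $H$ quasi-acts on $X$ cobounded on the complement of horoballs, with peripheral subgroups stabilizing horoballs. Schwartz's rigidity for non-uniform lattices, in the form characterizing groups quasi-isometric to truncated symmetric spaces (via the quasi-isometric rigidity of relatively hyperbolic pairs of Behrstock--Druţu--Mosher/Druţu), identifies $H$ as virtually a non-uniform lattice in $\mathrm{Isom}(X)$.
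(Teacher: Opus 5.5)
Your outline ends in the right place (nilpotent horosphere, a Carnot-type Heintze group $\widehat{N}\rtimes_\alpha\R$, then rigidity). However, it has a genuine gap: nothing in the proposal shows that $X'$ is quasi-isometric to $\widehat{N}\rtimes_\alpha\R$, and the mechanism you offer in Steps 1--2 does not work.

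\textbf{Steps 1--2.} A space with a horoball is an arbitrary proper length space with a warped horoball glued along its depth-$0$ horosphere. It is not assumed hyperbolic and carries no group action. So you cannot start from $\partial X'$, a point $\xi$, or a stabilizer; hyperbolicity of $X'$ is a \emph{conclusion} of the theorem.

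The proposed dilations also fail. The height shifts $(x,s)\mapsto(x,s+t)$ of $W(P)$ lie within distance $t$ of the identity, so they carry no dilation information. Uniform homotheties of the horosphere would require maps of $P$ scaling distances by $e^{t}$ with uniform constants. Such a family would make the Malcev completion $N$ quasi-isometric to its asymptotic cone $\widehat{N}$, which is false for general (non-Carnot) nilpotent groups. For the same reason, the horosphere need not be quasi-isometric to $\partial X'\smallsetminus\{\xi\}$ with a parabolic visual metric. Nor is there any way to extend such maps over the rest of $Y$.

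\textbf{The missing idea} is the paper's blow-up argument, which runs as follows.
\begin{enumerate}
\item By Pansu's theorem, the pointed spaces $(W(N),(p,i))$ converge to $\widehat{N}\rtimes_\alpha\R$ as $i\to\infty$; the dilations exist only in this limit.
\item Deep balls in $Y$ are isometric to deep balls in the horoball.
\item Cocompactness of $\Gamma$ lets you translate any ball $B_r(x)\subseteq X'$ so that its image lies arbitrarily deep.
\item This yields $(K,C)$-quasi-isometric embeddings $B_r(x)\to\widehat{N}\rtimes_\alpha\R$ with constants independent of $r$.
\item A diagonal argument over a net then gives the global quasi-isometry.
\end{enumerate}

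\textbf{Step 3.} The Loewner/conformal-dimension route cannot by itself produce a symmetric space. Boundaries of Fuchsian buildings are Ahlfors regular and Loewner (Bourdon--Pajot), and the group acts on them conformally, yet these groups are not rank-one lattices.

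In your final rigidity step, the Heintze group is exactly of Carnot type. That is the case \emph{not} covered by the Xie/Carrasco Piaggio results that quasi-isometries fix the special boundary point. The paper instead uses the Tukia-type theorem of Dymarz--Fisher--Xie to turn $\Gamma$'s quasi-action into a geometric isometric action on $(\widehat{N}\rtimes_\alpha\R,g_0)$. It then applies Caprace--Cornulier--Monod--Tessera (or Hamenst\"adt) to identify the isometry group, modulo a compact normal subgroup, as rank one.

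\textbf{Step 4.} Your deduction of (2) from classical rigidity is fine once (1) holds. For (3), a quasi-isometry from the cusped space to $X$ does not by itself give a quasi-isometry from $H$ to a non-uniform lattice. The relative rigidity results you cite concern quasi-isometries between groups. What is needed is a statement passing from cusped spaces to groups, such as the Healy--Hruska theorem the paper uses.
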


   A {\it space with a horoball} is a space that is obtained by gluing a warped horoball over a finitely generated group onto a proper metric space along the depth-$0$ horosphere; see Section~\ref{sec:rel_hyp}.  A finitely generated group $\Gamma$ is {\it virtually} a lattice in a locally compact group $G$ if there exists a finite-index subgroup $\Gamma' \leq \Gamma$ and a finite normal subgroup $K \triangleleft \Gamma'$ so that $\Gamma'/K$ is isomorphic to a lattice in $G$.

As a degenerate example of the first two parts of Theorem~\ref{thm:intro_main}, if a finitely generated group $\Gamma$ is quasi-isometric to a space with a horoball over a finite group, then $\Gamma$ is 2-ended and acts geometrically on $\mathbb{R}$. The analogue of the third part of Theorem~\ref{thm:intro_main} in the case of horoballs over a finite group is the relatively hyperbolic pair $(A, \{ A,A \})$, where $A$ is a finite group.

    The rank-one symmetric spaces are known to be distinguished among hyperbolic metric spaces in terms of group actions. Caprace--Cornulier--Monod--Tessera~\cite[Theorem E]{CCMT} proved that if a proper geodesic $\delta$-hyperbolic metric space admits both a geometric and a finitely generated geometrically finite group action then, up to a compact normal subgroup, its isometry group is isomorphic to the isometry group of a rank-one symmetric space of noncompact type or its index-two identity component. Trees also admit uniform and non-uniform lattices \cite{treelattices}, but the non-uniform lattices are always infinitely generated \cite[Theorem 3A]{treelattices}, \cite[Proposition 5.16]{BassLubotzky}. Here we restrict to the finitely-generated case. Theorem~\ref{thm:intro_main} extends the result in \cite[Theorem E]{CCMT} about lattices in the same locally compact group to the coarse setting of quasi-isometries.  The recent work on quasi-isometries of solvable groups by Dymarz--Fisher--Xie \cite{DymarzFisherXie-Tukia} is crucial to our results.

    \subsection{Connection to group boundaries}

    Two boundaries of proper visual hyperbolic spaces are quasisymmetric exactly when the spaces are quasi-isometric,  \cite[Section 8]{bonkschramm}.  Thus, the following is an immediate consequence of Theorem~\ref{thm:intro_main}.

     \begin{corollary} If $\Gamma$ is a hyperbolic group whose Gromov boundary is quasisymmetric to the Bowditch boundary of a finitely generated relatively hyperbolic group pair with infinite parabolic subgroups, then this boundary is a sphere, and $\Gamma$ is virtually a uniform rank--$1$ lattice.
     \end{corollary}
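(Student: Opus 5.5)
The corollary should follow from Theorem~\ref{thm:intro_main} once boundary quasisymmetries are converted into quasi-isometries of spaces via \cite[Section 8]{bonkschramm}. Let $(H,\mathbb{P})$ be the finitely generated relatively hyperbolic pair, with every $P\in\mathbb{P}$ infinite, and let $Y$ be its cusped space. $Y$ is a proper geodesic hyperbolic space whose Gromov boundary is the Bowditch boundary $\partial(H,\mathbb{P})$, and $Y$ is visual because $H$ acts on it cocompactly away from the horoballs. Likewise a Cayley graph of $\Gamma$ is a proper visual hyperbolic space with boundary $\partial\Gamma$.

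\textbf{Step 1.} By hypothesis $\partial\Gamma$ and $\partial(H,\mathbb{P})$ are quasisymmetric, with respect to visual metrics on each. The Bonk--Schramm theorem then gives a quasi-isometry between the Cayley graph of $\Gamma$ and $Y$. One point needs checking here: Bonk--Schramm extends quasisymmetries to quasi-isometries of the hyperbolic cones over the boundaries, so we must know that each space is quasi-isometric to the cone over its boundary. This holds for proper visual hyperbolic spaces with a cocompact (or cusp-uniform) action.

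\textbf{Step 2.} Take $X'$ to be the Cayley graph of $\Gamma$. Then $X'$ is quasi-isometric to $\Gamma$ and to $Y$. Fix $P\in\mathbb{P}$. The cusped space $Y$ contains the horoball over $P$, glued along its depth-$0$ horosphere to the rest of the space, so $Y$ is a space with a horoball over the finitely generated group $P$. (Peripheral subgroups of a finitely generated relatively hyperbolic group are finitely generated.) Theorem~\ref{thm:intro_main} therefore applies. Part~(2) gives that $\Gamma$ is virtually a uniform rank--$1$ lattice in the isometry group of $X$. Part~(1) says $X'$ is quasi-isometric either to $\R$ or to a rank-one symmetric space $X$.

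\textbf{Step 3: ruling out $\R$.} We use that $P$ is infinite. The horoball over $P$ is quasi-isometrically embedded in $Y$, and its boundary is a single point, namely the parabolic point. Its horospheres, however, contain $P$ with distances growing logarithmically, so $Y$ contains arbitrarily large balls around points deep in the horoball that are not within bounded distance of a line. Equivalently, $\partial(H,\mathbb{P})$ has more than two points, since the limit set of the infinite group $H$ is not a pair of points unless $H$ is two-ended. But a pair $(H,\{H\})$ with $H$ two-ended gives a one-point boundary, which is not quasisymmetric to a boundary of $\Gamma$ that is either empty or two points. Hence $X$ is a genuine rank-one symmetric space of real dimension $n\ge 2$.

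\textbf{Conclusion.} The boundary of $X$ is the sphere $S^{n-1}$, with a visual (Carnot--Carathéodory type) metric. Quasi-isometries induce homeomorphisms of boundaries, so $\partial\Gamma\cong\partial(H,\mathbb{P})\cong S^{n-1}$.

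The main technical obstacle is Step 1: verifying that the Bonk--Schramm hypotheses (visual, and quasi-isometric to the hyperbolic cone on the boundary) hold for the cusped space. If this becomes delicate, the cleaner route is to invoke the known result that the cusped space is a visual proper hyperbolic space whose boundary, with a visual metric, is the Bowditch boundary.
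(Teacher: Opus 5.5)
Your proposal is correct and is essentially the paper's argument: the paper also uses \cite[Section 8]{bonkschramm} to turn the boundary quasisymmetry into a quasi-isometry between $\Gamma$ and the cusped space, which is a space with a horoball over an infinite parabolic subgroup, and then reads off the conclusion from Theorem~\ref{thm:intro_main}. Your Step~3 makes explicit something the paper leaves implicit, and it can be shortened: in the proof of Theorem~\ref{thm:main_thm}, the $\R$ alternative arises only for a horoball over a finite group, while a horoball over an infinite $P$ yields $\widehat{N}\rtimes_{\alpha}\R$ of dimension at least $2$.
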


     Since $\mathbb{H}^3$ is the only rank--$1$ symmetric space with boundary $S^2$, Theorem~\ref{thm:intro_main} yields the following.

     \begin{corollary}
         If $\Gamma$ is a hyperbolic group with $2$-sphere boundary and $\Gamma$ is quasi-isometric to the cusped space of a relatively hyperbolic group, then $\Gamma$ is virtually Kleinian.
     \end{corollary}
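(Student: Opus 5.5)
The statement to prove is the last corollary: a hyperbolic group $\Gamma$ with $2$-sphere boundary that is quasi-isometric to the cusped space of a relatively hyperbolic group is virtually Kleinian. I would derive it directly from Theorem~\ref{thm:intro_main}.

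First I check the hypotheses of the theorem. Let $(H,\mathbb{P})$ be the relatively hyperbolic pair and $X'$ its cusped space, which is a proper metric space. Taking $\Gamma$ itself, with a word metric, as the proper space quasi-isometric to a finitely generated group, it remains to see that $X'$ is quasi-isometric to a space with a horoball. Since $\Gamma$ has $2$-sphere boundary it is one-ended and not virtually cyclic. Hence $H$ cannot be hyperbolic relative to an empty collection in a degenerate way, and we may assume $\mathbb{P}$ contains at least one peripheral subgroup $P$.

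The cusped space is obtained by gluing combinatorial horoballs over the peripheral cosets onto the Cayley graph. I would view it as a space with a horoball, namely one horoball over $P$ glued onto the rest of the cusped space along its depth-$0$ horosphere; this is the definition recalled in Section~\ref{sec:rel_hyp}. Two cases need separate comment.

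\begin{itemize}
\item If every $P$ is finite, the horoballs are quasi-rays. Then the cusped space is quasi-isometric to $H$, which is hyperbolic, and this fits the degenerate case over a finite group.
\item In that case the theorem would make $\Gamma$ two-ended, contradicting the $S^2$ boundary. So we may take $P$ infinite.
\end{itemize}

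Applying parts (1) and (2) of Theorem~\ref{thm:intro_main}, $\Gamma$ is quasi-isometric to $\R$ or to a rank-one symmetric space $X$, and $\Gamma$ is virtually a uniform lattice in $\mathrm{Isom}(X)$. The case $\R$ is excluded because $\partial\Gamma = S^2$.

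Quasi-isometric hyperbolic spaces have homeomorphic boundaries, so $\partial X \cong S^2$. The rank-one symmetric spaces $\H^n_{\R}$, $\H^n_{\C}$, $\H^n_{\mathbb{H}}$ and $\H^2_{\mathbb{O}}$ have boundary spheres of dimensions $n-1$, $2n-1$, $4n-1$ and $15$ respectively. Only $\H^3_{\R}$ has boundary $S^2$, since $\H^1_{\C}=\H^2_{\R}$ has boundary $S^1$.

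Therefore there is a finite-index $\Gamma'\le\Gamma$ and a finite normal $K$ with $\Gamma'/K$ a uniform lattice in $\mathrm{Isom}(\H^3)$. Passing to the index-$\le 2$ orientation-preserving part gives a discrete subgroup of $\mathrm{PSL}_2(\C)$, that is, a Kleinian group. So $\Gamma$ is virtually Kleinian.

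The main obstacle is the hypothesis check: confirming that a cusped space is a "space with a horoball" in the paper's sense, and disposing of finite or empty peripherals. I would handle both using the one-endedness of $\Gamma$ as above.
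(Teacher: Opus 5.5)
Your main line (apply Theorem~\ref{thm:intro_main}, discard $\R$ because $\partial\Gamma\cong S^2$, use the boundary-sphere dimensions of the rank-one symmetric spaces to force $X=\mathbb{H}^3_{\R}$, and pass from a virtual uniform lattice in $\mathrm{Isom}(\mathbb{H}^3)$ to a Kleinian group) is exactly the paper's one-line argument. The gap is in the hypothesis check you single out as the main obstacle. Your dismissal of $\mathbb{P}=\emptyset$ does not follow, because one-endedness of $\Gamma$ says nothing about $\mathbb{P}$. If $\mathbb{P}=\emptyset$, the cusped space is just $X_H$, which is quasi-isometric to the hyperbolic group $H$, and you may take $H=\Gamma$. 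Then there is no horoball and Theorem~\ref{thm:intro_main} does not apply. The statement then reads ``every hyperbolic group with $2$-sphere boundary is virtually Kleinian'', which is Cannon's conjecture. So this case cannot be argued away. The corollary must be read with a nontrivial peripheral structure in the hypothesis (some $P\in\mathbb{P}$ infinite, as in the preceding corollary's ``infinite parabolic subgroups''). State that as an assumption rather than claiming to derive it. Once some $P$ is infinite, view the cusped space as the horoball over that $P$ glued to the rest of the cusped space, and the theorem applies directly.

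Your finite-peripheral bullet also contains a claim that is both wrong and self-defeating. With finite $P$, the cusped space is not quasi-isometric to $H$: it is $H$ with a quasi-ray hanging off every coset of $P$. And if it were quasi-isometric to $H$, then $\Gamma$ being quasi-isometric to it would give no contradiction (again take $H=\Gamma$). What is true, and all you need if you only assume $\mathbb{P}\neq\emptyset$, is that the cusped space is then a space with a horoball over a finite group. The degenerate case (first line of the proof of Theorem~\ref{thm:main_thm}) then makes $\Gamma$ two-ended, contradicting $\partial\Gamma\cong S^2$.
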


     The boundary perspective also yields the following examples, and there are many other analogs of this.

    \begin{example}[Homeomorphic but not quasisymmetric boundaries]\label{ex:carpet}
       The visual boundary of a hyperbolic 3-manifold with non-empty totally geodesic boundary is homeomorphic to the Sierpinski carpet. The Sierpinski carpet is also the Bowditch boundary of the relatively hyperbolic group pair coming from a hyperbolic 3-manifold with non-empty totally geodesic boundary together with torus cusps. A conjecture of Kapovich and Kleiner \cite{kapovichkleiner} posits these are the only examples of Sierpinski carpet boundaries among hyperbolic and relatively hyperbolic group pairs. Theorem~\ref{thm:intro_main} implies a Sierpinski carpet boundary of any hyperbolic group and the Sierpinski carpet boundary of a relatively hyperbolic group pair are  quasisymmetrically distinct.
    \end{example}

    \subsection{Method of proof.}

    We work in the setting of a proper metric space that is quasi-isometric to both a finitely generated group and to a space with a horoball. Growth implies the combinatorial horoball must be constructed over a finitely generated virtually nilpotent group. We then analyze the structure of a horoball over a finitely generated virtually nilpotent group to prove the following.

    \begin{thm} \label{thm:intro_main_QI} (Theorem~\ref{thm:QI_to_Heintze})
        Let $X$ be a proper metric space. If $X$ is quasi-isometric to a finitely generated group and to a space with a horoball, then $X$ is quasi-isometric to a solvable Lie group $\widehat{N} \rtimes_{\alpha} \R$, where $(\widehat{N},\widehat{d})$ is the asymptotic cone of a finitely generated nilpotent group equipped with a Carnot--Carath\'{e}odory metric, and $\alpha$ acts on $(\widehat{N},\widehat{d})$ by dilation.
    \end{thm}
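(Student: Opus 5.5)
We follow the outline in the method-of-proof paragraph. Let $X$ be quasi-isometric to a finitely generated group $\Gamma$ and to a space $Y$ obtained by gluing a warped horoball $\mathcal{H}(P)$ over a finitely generated group $P$ onto a proper space along the depth-$0$ horosphere.

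\textbf{Step 1: growth forces $P$ virtually nilpotent.} The horoball contains, at each depth $t$, a copy of $P$ with distances compressed by roughly $e^{-t}$. A ball of radius $R$ about a deep point of the horoball therefore contains a ball of radius roughly $e^{R}$ in the Cayley graph of $P$, compressed to bounded size. Transporting through the quasi-isometry to $\Gamma$, the ball of radius $R$ in $\Gamma$ has at most exponential size $C^R$. Hence $|B_P(e^{R})|\le C'^{R}$, so $P$ has polynomial growth, and Gromov's theorem makes $P$ virtually nilpotent. Passing to a finite-index torsion-free nilpotent subgroup $N$ does not change the horoball up to quasi-isometry.

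\textbf{Step 2: the horoball over $N$ is a Heintze/Carnot model.} Let $\widehat N$ be the asymptotic cone of $N$, the graded Carnot group with a Carnot--Carath\'eodory metric $\widehat d$ (Pansu). Let $\alpha_t$ be the dilation by $e^t$. We show that the horoball $\mathcal{H}(N)$ is quasi-isometric to the half-space $\{t\ge 0\}\subset \widehat N\rtimes_\alpha\R$. To do this, embed $N$ as a lattice in its Malcev completion and use Pansu's theorem that the word metric is asymptotic to $\widehat d$ up to $o(d)$. The error is only additive $o(r)$, so on the log scale of the horoball metric, where horizontal distance $r$ costs about $2\log r$, it becomes a bounded additive error. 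This identification is the \textbf{main obstacle}. Pansu gives only sublinear rather than bounded error, and the Malcev completion of $N$ is only asymptotically, not isometrically, the Carnot group $\widehat N$. So the argument must carefully use the logarithmic distance formula in horoballs to absorb the $o(r)$ error.

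\textbf{Step 3: the whole space is the full solvable group.} We now have $X$ quasi-isometric to $Y$, which contains a quasi-isometrically embedded half-space $H^+$ of $S=\widehat N\rtimes_\alpha\R$. We use the cocompact quasi-action of $\Gamma$ on $X$: translate points deep in the horoball back near a basepoint. A limiting argument (Arzel\`a--Ascoli on quasi-isometries, as in Gromov's rigidity arguments) then shows that $X$ contains arbitrarily large balls uniformly quasi-isometric to balls of $S$ around deep points. More concretely, $Y$ is hyperbolic, since it is quasi-isometric to a hyperbolic-type model near infinity. Its boundary then contains an open set quasisymmetric to an open set of $\partial S\setminus\{\infty\}=\widehat N$, and quasi-symmetric homogeneity from the $\Gamma$ action shows that every point has such a neighbourhood. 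Compactness gives finitely many charts. Since $\partial X$ is a compact, locally self-similar, quasisymmetric copy built from $\widehat N$, it is quasisymmetric to the one-point compactification $\widehat N\cup\{\infty\}=\partial S$. By Bonk--Schramm, $X$ is then quasi-isometric to $S$.
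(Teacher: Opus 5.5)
Your Step 1 is fine in substance: it is the Dahmani--Yaman bounded-geometry argument the paper cites, combined with Gromov's theorem. You do need to count a uniformly separated set, for example $e^{t}$-separated points of $P$ at a fixed depth $t$, rather than points ``compressed to bounded size''.

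\textbf{Step 2.} The first genuine gap is here. Pansu's theorem does not give $|d(x,y)-\widehat d(x,y)|=o(d(x,y))$ for all pairs. It gives an error $o(\rho)$ for $x,y$ in the $\rho$-ball about a basepoint. The group laws of $N$ and $\widehat N$ differ by lower-order Baker--Campbell--Hausdorff terms. So two points at $d$-distance $1$ but at distance $\rho$ from the basepoint can be at $\widehat d$-distance a positive power of $\rho$. On the depth-$0$ horosphere this changes horoball distances by roughly $c\log\rho$, which is proportional to the distance to the basepoint, so taking logarithms absorbs nothing. In fact your depth-preserving identification is a quasi-isometry $W(N)\to W(\widehat N)$ only if it is already a quasi-isometry $N\to\widehat N$; to see this, compare $(x,t)$ and $(y,t)$ with $t=\log d(x,y)$. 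Pansu does not provide that, and it is false in general for the identification you describe. What is true, and all that is needed, is the local statement of Proposition~\ref{prop:GH_convergence}. Since the depth-$i$ horosphere is $(N,e^{-i}d)$, balls of fixed radius about $(p,i)$ converge in the pointed Gromov--Hausdorff sense to balls about $\mathrm{Id}$ in $S=\widehat N\rtimes_\alpha\R$.

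\textbf{Step 3.} This is the more serious gap, and the boundary route does not work.
\begin{itemize}
\item The horoball, like the half-space $\{t\ge 0\}$ of $S$, has a one-point boundary. The copy of $\widehat N$ in $\partial S$ is reached only by rays with $t\to-\infty$, which leave the half-space. So a quasi-isometrically embedded half-space contributes just the parabolic point to $\partial Y$, not an open set quasisymmetric to an open subset of $\widehat N$.
\item Hyperbolicity of $Y$ is asserted without a valid reason; the rest of $Y$ is an arbitrary proper space.
\item A finite atlas of local quasisymmetric charts does not glue to a global quasisymmetry with $\widehat N\cup\{\infty\}$. That local-to-global passage is essentially the whole difficulty.
\end{itemize}

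\textbf{The missing idea.} It is already in your first sentences of Step 3; you only need to finish it directly.
\begin{itemize}
\item Use $\Gamma$ to move $B_r(x)\subseteq X$ deep enough into the horoball that the gluing is invisible (Remark~\ref{rem:balls_the_same}) and the ball is Gromov--Hausdorff close to a ball in $S$.
\item This yields $(K,C)$-quasi-isometric embeddings $f_r:B_r(x)\to S$ with $K,C$ independent of $r$, $f_r(x)=\mathrm{Id}$, and values in a fixed net of $S$.
\item There are only finitely many such maps on each ball, so a diagonal subsequence converges to a $(K,C)$-quasi-isometric embedding $f:X\to S$.
\item This $f$ is coarsely onto because each $f_r$ is coarsely onto a ball of radius comparable to $r/K$.
\end{itemize}
This is the paper's proof, and it needs neither hyperbolicity of $Y$, nor boundaries, nor Bonk--Schramm.
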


    Theorem~\ref{thm:intro_main} is then deduced in Section~\ref{sec:QI} as Theorem~\ref{thm:main_thm}.  We apply a theorem of  Dymarz--Fisher--Xie~\cite[Corollary 1.2]{DymarzFisherXie-Tukia} to obtain a geometric group action on $\widehat{N} \rtimes_{\alpha} \R$. Work of Caprace--Cornulier--Monod--Tessera~\cite[Proposition 5.10]{CCMT}
     then implies these spaces must be quasi-isometric to a rank-one symmetric space; this conclusion also follows from Hamenst\"{a}dt~\cite[Theorem 1]{Hamenstadt-Isom}.

    We note that the solvable group $\widehat{N} \rtimes_{\alpha} \R$ is an example of a {\it Heintze group of Carnot type}. These groups were studied by Heintze~\cite{Heintze}, who characterized the connected homogeneous Riemannian manifolds of negative sectional curvature. For additional background on these groups, see \cite{ledonne, LeDonne-book}.

\subsection*{Acknowledgments}

Some motivation for this result arose from a discussion at the American Institute for Mathematics, where a group proved Example \ref{ex:carpet}.  We thank Peter Ha\"issinsky for discussions. ES is thankful for helpful discussions with Jeff Carlson. DG was supported by NSF grant DMS-250646. ES was supported by NSF grant DMS-2204339; GW through NSF DMS-2405033.

    \section{Preliminaries} \label{sec:prelims}

    \subsection{Rank-one symmetric spaces of non-compact type} \label{sec:symmetric}

    A rank-one symmetric space of noncompact type is a quotient $G/K$, where $G$ is a connected, centerless, real semisimple Lie group of rank $1$ and without compact factors, and $K$ is a maximal compact normal subgroup of $G$. The rank-one symmetric spaces of non-compact type are classified as the hyperbolic spaces $\Hy^n_{\K}$, where $\K$ is either $\R$, $\C$, the quaternions, or the octonians, in which case $n=2$. These spaces are the quotients $\SO_0(1,n)/\SO(n)$, $\SU(1,n)/U(n)$, $\Sp(1,n)/\Sp(n)$, and $F_4/\Spin 9$, respectively.
    For background, see \cite[Chapter~19]{Mostow73}, \cite[Chapter~13]{LeDonne-book}, \cite[Chapter II.10]{bridsonhaefliger}.

    A subgroup of the isometry group of a rank-one symmetric space is {\it parabolic} if it fixes a unique point on the visual boundary of the space. A description of the parabolic subgroups is given in \cite[Proposition II.10.28]{bridsonhaefliger}. In particular, a 2-step nilpotent group acts simply transitively on each horosphere of a rank-one symmetric space. These groups are classified as the Heisenberg groups: $\cN_{\K}^n$ acts on each horosphere in $\Hy^n_{\K}$ for $\K = \R, \C$, and $\H$; see \cite[Exercises 13.4.13, 13.4.18]{LeDonne-book} for the structure of $\cN^n_{\K}$. In particular, $\cN_{\C}^2$ is the real Heinsenberg group of dimension 3.  The parabolic subgroups in the Cayley plane case are subgroups of a 15-dimensional nilpotent group.

We note that rank-one symmetric spaces are rigid in the following sense. If a group $\Gamma$ is quasi-isometric to a rank-one symmetric space $X$, then $\Gamma$ virtually acts geometrically on $X$; the $\Hy^2_{\R}$-case is due to \cite{tukia, gabai, cassonjungreis}; the $\Hy^n_{\R}$-case for $n>2$ is due to \cite{Tukia-quasiconformal}; the case of $\Hy^n_{\C}$ is due to \cite{Chow96}; and, the case of the quaternionic and Cayley hyperbolic spaces is due to \cite{Pansu89}. The work of Dymarz--Fisher--Xie~\cite{DymarzFisherXie-Tukia} that we apply here generalizes the results of Tukia~\cite{Tukia-quasiconformal} and Chow~\cite{Chow96} to solvable Lie groups of the form $N \rtimes \R$, where $N$ is a Carnot (nilpotent) group.

    \subsection{Horoballs and relatively hyperbolic groups} \label{sec:rel_hyp}

    There are many notions of horoballs. Classically, they may be defined as a subset of a hyperbolic metric space together with a Busemann function; see \cite{buyaloschroeder}. Alternatively, they can be constructed over a proper metric space to build a $\delta$-hyperbolic metric space with a single boundary point \cite{cannoncooper92}, \cite{grovesmanning}, \cite{bowditch12}. Under natural assumptions, constructions of the latter type yield quasi-isometric spaces \cite{cannoncooper92}, \cite[Appendix A]{grovesmanningsisto}, \cite{healyhruska}. We work in the setting of the following warped products, as this most closely models the geometries we analyze; see Proposition~\ref{prop:GH_convergence}.

      \begin{defn}[Warped product] \label{defn:warped_prod}
        Let $X$ be a complete length space. Let $W(X)$ denote the {\it warped product} $X \times [0,\infty)$ with warped product metric given by the path metric, where the length $\ell(\gamma)$ of a Lipschitz curve $\gamma:[a,b] \rightarrow X \times [0,\infty)$ is given by
            \[ \ell(\gamma):= \int_a^b \sqrt{e^{-2(\gamma_{\R}(t))}|\gamma_X'(t)|^2 + |\gamma_{\R}'(t)|^2} \, dt, \]
        where $\gamma_{X}:[a,b] \rightarrow X$ and $\gamma_{\R}:[a,b] \rightarrow [0,\infty)$ are the projections and the speeds $|\gamma_X'|$ and $|\gamma_{\R}'|$ exist almost everywhere by \cite[Theorem 2.7.6]{buragoburagoivanov}.
    \end{defn}

        If $X$ and $X'$ are quasi-isometric proper length spaces, then $W(X)$ and $W(X')$ are quasi-isometric.

    \begin{defn}[Warped horoball]
        A {\it warped horoball} on a finitely generated group $\Gamma$, denoted $\cH(\Gamma)$, is a warped product $W(X)$, where $X$ is a proper length space equivariantly quasi-isometric to $\Gamma$. The {\it depth-t horosphere} of $\cH(\Gamma)$ is the subspace $X \times \{t\} \subseteq W(X)$.
    \end{defn}

    The Cannon--Cooper horoballs \cite{cannoncooper92} and the combinatorial horoballs of Groves--Manning~\cite{grovesmanning} are quasi-isometric to warped horoballs on a finitely generated group by \cite[Appendix A]{grovesmanningsisto},\cite{healyhruska}. The hyperbolic spike horoballs of Bowditch~\cite{bowditch12} are examples of warped horoballs.

    \begin{remark} \label{rem:QI_to_warped}
        If $P$ is a finitely generated nilpotent group, then $P$ has a finite-index torsion free subgroup $P'$ \cite[Chapter 1]{segal}. The Malcev completion $N_{P'}$ is a connected, simply connected nilpotent Lie group which is quasi-isometric to $P'$ [Rag07, Theorem 2.18], [Mal49]. Thus the warped product $W(N_{P'})$ is a warped horoball $\cH(P)$.
    \end{remark}

   \begin{defn}[Space with a horoball]
      Let $\Gamma$ a finitely generated group. A {\it space with a horoball} is the quotient $\bigl(X \sqcup \cH(\Gamma)\bigr)/\sim$, where $X$ is a proper length metric space, $\cH(\Gamma)$ is a warped horoball on~$\Gamma$, and the depth-0 horosphere $S \subseteq \cH(\Gamma)$ is identified with its image in $X$ under a map $S \rightarrow X$.  We put the natural length metric on the space with a horoball.
    \end{defn}

    We note that a space with a horoball need not contain a quasi-isometrically embedded horoball, as the gluing can distort distances along the depth-0 horosphere. Moreover, the property of containing a quasi-isometrically embedded horoball is not strong enough for our purposes: every infinite $\delta$-hyperbolic group contains a quasi-isometrically embedded horoball over a finite group. The next remark records precisely the property that we will utilize.

  \begin{remark}  \label{rem:balls_the_same}
    Let $Y$ be a space with a horoball $\cH(\Gamma)$, and let $i:\cH(\Gamma) \rightarrow Y$.  For all $r>0$ there exists $t \in \N$ so that each $r$-ball $B_r(x)$ in $\cH(\Gamma)$ at depth greater than $t$ is isometric to the $r$-ball $B_r(i(x))$ in~$Y$.
  \end{remark}

    Spaces with a horoball arise naturally in the following context. A {\it group pair} $(H,\bP)$ is a group $H$ together with a collection of subgroups $\bP$ of $H$. We assume that $\bP$ is a finite collection and that $H$ and each group $P \in \bP$ are finitely generated.

\begin{defn}[Cusped space]
        Let $(H, \bP)$ be a group pair, and let $X_H$ be a proper length metric space that $H$ acts on geometrically. For each $P \in \bP$, let $W(P)$ be a warped horoball on $P$.  Let $Hx$ denote the $H$-orbit of a point $x \in X_H$. For each $P \in \mathbb{P}$, choose a quasi-isometry $q_P:S_0 \rightarrow P$, where $S_0$ is the depth-0 horosphere of $W(P)$ and is equipped with the intrinsic metric (not the metric induced by $W(P)$). Then the {\it cusped space} $X(H,\bP)$ is obtained  by identifying $S_0$ to the orbit $hPx$ via the map $h \circ q_P$, for each distinct coset $hP$ of each $P\in \mathbb{P}$.
        A group pair $(H,\bP)$ is \emph{relatively hyperbolic} if the cusped space $X(H, \bP)$ is $\delta$-hyperbolic.
    \end{defn}

   Note that this definition of relatively hyperbolic group pair is equivalent to other definitions in the literature.  In particular, a cusped space above is quasi-isometric to the space in \cite{grovesmanning}.  See also  \cite{hruska10} and \cite{healyhruska}.

    \begin{defn}[Bowditch boundary]
         Let $(H,\bP)$ be a relatively hyperbolic group pair, and suppose each $P \in \bP$ is an infinite group. The \emph{Bowditch boundary} $\partial (H, \bP)$ is the visual boundary of a cusped space $X(H, \bP)$.
    \end{defn}

    Bowditch~\cite{bowditch12} proved that the homeomorphism type of the Bowditch boundary is independent of the choices above.


    \subsection{Finitely generated nilpotent groups}

    Crucial to the results in this paper is the description of the asymptotic cone of a finitely generated nilpotent group given by Pansu~\cite{pansu_cones}, which extends the work of Gromov~\cite{gromov-poly}.

    \begin{defn}[Gromov--Hausdorff convergence]
        Let $X$ and $Y$ be compact metric spaces. The {\it Gromov--Hausdorff distance} between $X$ and $Y$, denoted $d_{GH}(X,Y)$, is the infimum of the Hausdorff distance $d_{Haus}\bigl(f(X), g(Y)\bigr)$, taken over all compact metric spaces $M$ and isometric embeddings $f:X \rightarrow M$ and $g:Y \rightarrow M$.

        A {\it pointed metric space} is a pair $(X,p)$ of a metric space $X$ together with $p \in X$. A sequence of pointed proper metric spaces $(X_n,p_n)$ converges to a pointed proper metric space $(X,p)$ in the {\it pointed Gromov--Hausdorff sense} if for each $R>0$ the sequence of closed $R$-balls around $p_n$ in $X_n$ converges to the closed $R$-ball around $p$ in $X$ with respect to the Gromov--Hausdorff distance.
    \end{defn}

Pansu \cite{pansu_cones} proved that the scaled metrics on a nilpotent simply-connected Lie group $G$ converge in the pointed Gromov-Hausdorff sense to a graded nilpotent Lie group $(G_\infty,d_\infty)$.  The Lie algebra $\mathfrak{g}_\infty$ of $G_\infty$ is the associated Carnot graded Lie algebra to $\mathfrak {g}$.

    \begin{thm} \cite[Theorem 10; Section B]{pansu_cones} \label{thm:pansu_results}
        Let $N$ be a connected, simply connected nilpotent Lie group.
        The asymptotic cone $\widehat{N}$ of $N$ is unique and has the structure of a stratified (nilpotent) Lie group, whose limiting metric $\widehat{d}$ is a left-invariant Carnot--Carath\'{e}odory metric. Moreover, $(\widehat{N},\widehat{d})$ admits a $1$--parameter family $\alpha_t$ of automorphisms that scale the metric $\widehat{d}$ by $e^{-t}$.
    \end{thm}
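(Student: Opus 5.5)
The plan is to realize the rescaled spaces $(N,\tfrac1\lambda d)$ concretely as a family of nilpotent Lie groups on one underlying vector space and to show that their group laws and metrics converge as $\lambda\to\infty$. Fix a left-invariant Riemannian metric $d$ on $N$ with Lie algebra $\mathfrak g$. Let $\gamma_1=\mathfrak g\supset\gamma_2=[\mathfrak g,\gamma_1]\supset\cdots\supset\gamma_{s+1}=0$ be the lower central series. Choose complements $V_i$ with $\gamma_i=V_i\oplus\gamma_{i+1}$, so that $\mathfrak g=V_1\oplus\cdots\oplus V_s$. For $\lambda>0$ let $\delta_\lambda$ be the linear map acting on $V_i$ by multiplication by $\lambda^i$. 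Define the rescaled brackets
\[
[X,Y]_\lambda:=\delta_\lambda^{-1}[\delta_\lambda X,\delta_\lambda Y].
\]
Since $[V_i,V_j]\subseteq\gamma_{i+j}$, the component of $[X,Y]_\lambda$ in $V_k$ for $k>i+j$ carries a factor $\lambda^{i+j-k}\to 0$. Hence $[\cdot,\cdot]_\lambda$ converges to the graded bracket $[\cdot,\cdot]_\infty$. This graded bracket is the bracket of the Carnot algebra $\mathfrak g_\infty=\bigoplus_i\gamma_i/\gamma_{i+1}$, which is stratified and generated by $V_1$. Through the Baker--Campbell--Hausdorff formula, which is a finite polynomial expression, the group laws $*_\lambda$ on $\mathfrak g$ converge uniformly on compacta to the law of $\widehat N:=(\mathfrak g,*_\infty)$.

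Next I transport the metrics. Set $d_\lambda(X,Y):=\tfrac1\lambda d\bigl(\exp\delta_\lambda X,\exp\delta_\lambda Y\bigr)$. Then $(\mathfrak g,*_\lambda,d_\lambda)$ is isometric to $(N,\tfrac1\lambda d)$, and $d_\lambda$ is the left-invariant Riemannian distance for $*_\lambda$ whose norm on $V_i$ is scaled by $\lambda^{1-i}$. The candidate limit $\widehat d$ is the Carnot--Carath\'eodory metric on $\widehat N$ defined by horizontal curves tangent to the left translates of $V_1$. The norm on $V_1$ is taken as the quotient norm that $\mathfrak g/[\mathfrak g,\mathfrak g]\cong V_1$ inherits from the Riemannian norm; this makes it independent of the choice of $V_1$. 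The goal is to show that $d_\lambda\to\widehat d$ uniformly on compact subsets of $\mathfrak g$, since that immediately gives pointed Gromov--Hausdorff convergence. In particular, the asymptotic cone is the same for every ultrafilter and scaling sequence, which is the uniqueness claim. As a preliminary, I will establish uniform ball--box estimates, $d_\lambda(0,X)\asymp\sum_i|X_i|^{1/i}$ uniformly in $\lambda\ge1$ (Guivarc'h-type). This gives equicontinuity, so Arzel\`a--Ascoli yields subsequential limits of $d_\lambda$ that are left-invariant for $*_\infty$.

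\emph{Upper bound.} Take a $\widehat d$-geodesic, i.e.\ a horizontal curve with control $u(t)\in V_1$. Solve the same control system for $*_\lambda$. The solutions converge to the original curve, and their $d_\lambda$-lengths converge to its $\widehat d$-length, because the $V_1$-direction is not rescaled; the small discrepancy at the endpoint is absorbed by the ball--box estimate. This shows $\limsup d_\lambda\le\widehat d$.

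\emph{Lower bound.} This is the main obstacle. A $d_\lambda$-short curve may have large velocity in the higher layers, where the metric is cheap, and one must show this does not let it beat horizontal curves in the limit. My plan is to reparametrize $d_\lambda$-geodesics by arclength and write their velocities in $V_1\oplus(\text{higher layers})$. The higher components of the velocity have Riemannian size at most $O(\lambda^{-1})$ after unscaling, so their contribution to the endpoint, computed through the converging BCH formula, tends to zero. A limit curve (again by Arzel\`a--Ascoli) is therefore horizontal for $*_\infty$, with speed bounded by the limit of the $V_1$-projected speeds. Here I use that projection to the abelianization is $1$-Lipschitz for the quotient norm. This gives $\liminf d_\lambda\ge\widehat d$. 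The delicate point is controlling the integrated effect of the higher-layer components through the nonlinear group law; I would handle it by induction on the layer index $k$, using that the $V_k$-coordinate of a curve is an iterated integral of lower-layer coordinates plus its own velocity.

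Finally, the dilations. For $\mathfrak g_\infty$ graded, the maps $\alpha_t:=\delta_{e^{-t}}$ are Lie algebra automorphisms of $[\cdot,\cdot]_\infty$, because the bracket is homogeneous of degree $i+j$ on $V_i\times V_j$. They map horizontal curves to horizontal curves and multiply the $V_1$-velocity by $e^{-t}$. Consequently $\widehat d(\alpha_tX,\alpha_tY)=e^{-t}\,\widehat d(X,Y)$, which is the asserted one-parameter family scaling $\widehat d$ by $e^{-t}$.
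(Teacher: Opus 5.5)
The paper gives no proof of this statement; it quotes it from Pansu. Your outline is essentially Pansu's own argument: rescaled brackets converging to the graded bracket, convergence of the BCH group laws, $\limsup d_\lambda\le\widehat d$ by running horizontal controls, and $\liminf d_\lambda\ge\widehat d$ by showing that limits of $d_\lambda$-geodesics are horizontal, with length controlled through the abelianization. So the approach is right, but one computation is inverted and must be fixed.

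For $v\in V_i$, the $d_\lambda$-norm is $\tfrac1\lambda|\delta_\lambda v|=\lambda^{i-1}|v|$, not $\lambda^{1-i}|v|$. So in the rescaled metric the higher layers become \emph{expensive}, not cheap. This is exactly what the lower bound needs: a unit-speed $d_\lambda$-curve has left-trivialized velocity whose $V_i$-component is $O(\lambda^{1-i})$, which is the bound you later invoke. With the exponent as you wrote it, unit-speed curves could move at speed $\lambda^{i-1}$ in $V_i$, the non-horizontal directions would collapse in the limit, and the lower bound would be false. As written, your scaling claim and your velocity bound contradict each other.

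Once the exponent is corrected, the lower bound is not delicate. The structure constants of $[\cdot,\cdot]_\lambda$ are polynomial in $1/\lambda$, and the left-invariant vector fields of $*_\lambda$ are triangular with respect to the grading. The $V_1$-parts of the controls subconverge weak-$*$ in $L^\infty$, while the other parts tend to $0$ uniformly. The control-affine ODE therefore passes to the limit, and lower semicontinuity of $\int|u_1|_{\mathrm{quot}}$ gives the length estimate.

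There are also two smaller corrections.

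First, the ball--box estimate is not uniform in $\lambda$ in the lower direction at scales below $1/\lambda$. In the Heisenberg group, $d_\lambda\bigl(0,(0,0,z)\bigr)\approx\lambda|z|\ll|z|^{1/2}$ when $\lambda^2|z|\ll 1$. What does hold uniformly is the upper estimate, which is all you use for equicontinuity and for absorbing the endpoint error. The lower estimate holds up to an additive $O(1/\lambda)$, and that is what you need to keep $d_\lambda$-balls of radius $R$ inside a fixed compact set when deducing pointed Gromov--Hausdorff convergence.

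Second, in the upper bound, running the same $V_1$-valued control gives $d_\lambda$-length $\int|u|$ for the Riemannian norm restricted to $V_1$. This agrees with the quotient norm only when $V_1=\gamma_2^{\perp}$. Either take $V_1$ orthogonal to $\gamma_2$, or add to the control vertical corrections of size $\lambda^{1-i}$ in $V_i$. Otherwise your upper and lower bounds refer to different Carnot--Carath\'eodory norms.
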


    \begin{notation}[Associated solvable group] \label{nota:Heintze_group}
        Let $N$ be a connected, simply connected nilpotent Lie group. As in Theorem~\ref{thm:pansu_results}, the asymptotic cone $(\widehat{N}, \widehat{d})$ admits a 1-parameter family of automorphisms $\{\alpha_t\,|\, t \in \R\}$. The associated warped product structure on $\widehat{N} \rtimes_{\alpha} \R$ with warping function $f(t)=e^{-t}$ therefore gives rise to a solvable Lie group $\widehat{N} \rtimes_{\alpha} \R$.
        We call the group $\widehat{N} \rtimes_{\alpha} \R$ together with the warped product metric the {\it associated solvable group} to $N$.
        We use $Id_{S(N)}$ to denote the identity in $S(N):=\widehat{N} \rtimes_{\alpha} \R$.
    \end{notation}

    \section{Horoball structure}

    Growth of balls in a horoball, as analyzed by Dahmani--Yaman~\cite{dahmaniyaman}, restricts the spaces we must consider to horoballs over nilpotent groups.

    \begin{lemma}[Reduction to nilpotent groups] \label{lem:must_be_nilpotent}
        If a space with a horoball $\cH(\Lambda)$ over an infinite group $\Lambda$ is quasi-isometric to a finitely generated group, then $\Lambda$ is a finitely generated virtually nilpotent group.
    \end{lemma}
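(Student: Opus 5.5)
The plan is to compare volume growth in the horoball with volume growth in the group, and then invoke Gromov's theorem. Let $Y$ be the space with a horoball $\cH(\Lambda)=W(X_\Lambda)$, and let $f\colon Y\to G$ be a quasi-isometry to a finitely generated group $G$. Fix a word metric on $G$. Since $Y$ is quasi-isometric to $G$, it has bounded geometry at a fixed scale. More precisely, for any $r$-ball $B$ in $Y$, the maximal $1$-separated subsets of $B$ have cardinality bounded above by $C|B_G(Kr+K)|\le C\,c^{Kr}$, where $c$ is the size of a generating set plus one. So the number of $1$-separated points in an $r$-ball of $Y$ grows at most exponentially in $r$, with a rate independent of the center.

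The next step uses Remark~\ref{rem:balls_the_same}. For each $r$ it provides deep balls in $\cH(\Lambda)$ that are isometric to $r$-balls in $Y$, so the same exponential bound holds for $r$-balls of $\cH(\Lambda)$ centered at large depth. I then count separated points in such a ball, following Dahmani--Yaman.

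Take a point $(x,s)$ with $s$ large. At depth $s+t$, for $0\le t\le r/2$, the horosphere has metric scaled by $e^{-(s+t)}$. Hence the $r$-ball contains the points $(y,s+r/2)$ with $d_{X}(x,y)\le e^{s+r/2}\cdot(\text{const})$. Indeed, one travels down $r/2$, moves horizontally a distance at most $1$ in the warped metric, and comes back. Two such points are $1$-separated in $W(X_\Lambda)$ if their $X_\Lambda$-distance exceeds a constant times $e^{s+r/2}$; this needs checking via the warped-product distance formula. The horizontal reach, however, is about $e^{s+r}$: at depth $s+r/2$ one can travel horizontally a warped distance of order $1$, that is, $X$-distance $e^{s+r/2}$. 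Redoing this properly, at depth $s+\tau$ the $r$-ball reaches horizontal $X$-distance about $e^{s+\tau}(r-2\tau)$, while the separation scale is $e^{s+\tau}$. Taking $\tau=r/2-1$, the number of $1$-separated points is at least $\beta_\Lambda(R)/\beta_\Lambda(R')$-type ratios. The cleaner route is to stay at one depth and count across depths. At depth $s+\tau$, the separated points number about $\beta_\Lambda\bigl(e^{\tau'}\bigr)$ for horizontal reach $e^{\tau'}$ measured at the unit scale of that level. Concretely, I would compare the $X$-ball of radius $e^{s+r/2}$ with a packing by $X$-balls of radius $e^{s}$: points at depth $s$ that are $e^{s}$-apart in $X$ are $1$-separated in the warped metric. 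The count is then at least $\beta_\Lambda(e^{s+r/2})/\beta_\Lambda(e^{s})$, where $\beta_\Lambda$ is the growth function of $\Lambda$ (equivalently of $X_\Lambda$, via the equivariant quasi-isometry).

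Combining the two estimates gives $\beta_\Lambda(e^{s+r/2})\le C c^{Kr}\beta_\Lambda(e^{s})$ for suitable large $s=s(r)$. This is the main obstacle: from this inequality I must extract polynomial growth of $\Lambda$ at infinitely many scales. The inequality only holds at depths $s\ge s(r)$, so I cannot simply iterate from $s=0$. What I would try first is to fix one $r=r_0$ and use the fact that there is a single $t_0=s(r_0)$ for which the bound holds for all $s\ge t_0$. Iterating from $s=t_0$ then gives $\beta_\Lambda(e^{t_0+kr_0/2})\le (Cc^{Kr_0})^k\beta_\Lambda(e^{t_0})$. Setting $R=e^{t_0+kr_0/2}$ yields $\beta_\Lambda(R)\le A R^{d}$ with $d=2\log(Cc^{Kr_0})/r_0$, a polynomial bound.

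Two points remain. First, $\Lambda$ must be finitely generated; this is already built into the definition of warped horoball. Second, $\Lambda$ is infinite, so the growth function is nondegenerate. With the polynomial bound in hand, Gromov's polynomial growth theorem~\cite{gromov-poly} shows that $\Lambda$ is virtually nilpotent.
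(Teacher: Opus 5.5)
Your argument is correct in outline, but it takes a more hands-on route than the paper. The paper only observes that bounded geometry passes from $\Gamma$ to $\cH(\Lambda)$ (and to Bowditch's quasi-isometric horoball), and then cites Dahmani--Yaman, Theorem 0.1 ``and its proof''. You instead carry out the growth comparison directly in the warped horoball:
\begin{enumerate}
\item a lower bound on the number of separated points in a deep $r$-ball, obtained by packing a single horosphere, which gives a ratio $\beta_\Lambda(ae^{s+r/2})/\beta_\Lambda(be^{s})$;
\item an exponential upper bound coming from $G$;
\item iteration at one fixed radius $r_0$, starting from the fixed depth $t_0$ supplied by Remark~\ref{rem:balls_the_same};
\item Gromov's theorem to conclude.
\end{enumerate}
This is self-contained and avoids passing to Bowditch's model. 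It also makes explicit that you only need bounded geometry of deep balls at a single radius. That is exactly what the remark provides, and it is what the paper's ``and its proof'' implicitly relies on. The paper's proof gains brevity by outsourcing this computation.

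Several details need repair, though none changes the strategy.

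\emph{Scale of separation.} Bounded geometry at scale $1$ does not transfer through a $(K,C)$-quasi-isometry $f\colon Y\to G$, because $f$ may collapse many $1$-separated points. So your stated bound on $1$-separated subsets is false for a general $Y$; for instance, $Y$ may contain bounded pieces of arbitrarily high dimension. Deep in the horoball, a scale-$1$ bound is essentially the statement you are trying to prove, so you cannot take it for free. Work instead with $s_0$-separated sets, where $s_0=K(C+1)$. Then $f$ is injective on such sets, so an $r$-ball of $Y$ contains at most $N(r):=|B_G(Kr+C)|$ of them.

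\emph{Separation on a horosphere.} Your claim that points at depth $s$ which are $e^s$-apart are $1$-separated is slightly wrong: in $\mathbb{H}^2$ that distance is $\operatorname{arccosh}(3/2)<1$. Use the following bound instead. A path of length $\ell$ from $(y,s)$ to $(y',s)$, parametrized by arc length, lies at depth at most $s+\min(t,\ell-t)$ at time $t$. Hence $d_{X_\Lambda}(y,y')\le 2e^{s}(e^{\ell/2}-1)$. So points at depth $s$ that are $Le^{s}$-apart in $X_\Lambda$ are $s_0$-separated, where $L=2(e^{s_0/2}-1)$.

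\emph{Constants in the iteration.} After passing through the quasi-isometry $X_\Lambda\to\Lambda$ you get
\[
\beta_\Lambda(ae^{s+r/2})\le N(r)\,\beta_\Lambda(be^{s}) \quad\text{for } s\ge t(r),
\]
with $a,b$ independent of $r$. Before iterating, choose $r_0$ so that $\delta=r_0/2+\log(a/b)>0$. Then
\[
\beta_\Lambda(R)\le A R^{\log N(r_0)/\delta} \quad\text{along } R=be^{t_0+k\delta},
\]
and by monotonicity this holds for all large $R$.

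\emph{Presentation.} Remove the abandoned computations in your third paragraph. The single-horosphere packing count is all you need.
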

    \begin{proof}
        Suppose there is a quasi-isometry from a space with a horoball $\cH(\Lambda)$ to a finitely generated group~$\Gamma$. The group $\Gamma$ has {\it bounded geometry}, meaning there exists a function $f:\R_+\rightarrow \R_+$ so that for all $R>0$ every ball of radius $R$ can be covered by $f(R)$ balls of radius 1. Thus, the horoball $\cH(\Lambda)$ has bounded geometry, as does the corresponding quasi-isometric horoball defined by Bowditch~\cite{bowditch12} and considered by Dahmani--Yaman. Therefore, $\Lambda$ is a finitely generated virtually nilpotent group by \cite[Theorem 0.1]{dahmaniyaman} and its proof.
    \end{proof}

    We note that in the case that a proper metric space $X$ is quasi-isometric to a finitely generated group and to the cusped space of a relatively hyperbolic group pair $(H, \bP)$, then Lemma~\ref{lem:must_be_nilpotent} also follows from the quasisymmetric doubling condition with respect to the boundaries $\p X$ and $\p(H, \bP)$ by \cite[Theorem 9.2]{bonkschramm}, \cite[Proposition 4.5]{mackaysisto}, \cite[Theorem 10.18]{heinonen-Lectures}.

    \begin{prop}[Convergence to solvable group] \label{prop:GH_convergence}
        Let $N$ be a connected, simply connected nilpotent Lie group, equipped with a left-invariant metric, and let $W(N)$ be the corresponding warped product.
        Let $w_i = (n_i,i)$ for $i \in \N$ be a sequence of points in $ W(N)$. The sequence of pointed spaces $(W(N),w_i)$ converges in the pointed Gromov--Hausdorff sense to the associated solvable group $(\widehat{N} \rtimes_{\alpha} \R, Id_{S(N)})$.
    \end{prop}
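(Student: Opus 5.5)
We will use the self-similarity of the warped product under vertical translation, and then invoke Pansu's theorem (Theorem~\ref{thm:pansu_results}). By left-invariance of the metric on $N$, left translation by $n_i^{-1}$ in the $N$-factor is an isometry of $W(N)$. So we may assume $n_i = e$ for all $i$.

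Next, shift the height coordinate. The map $\Phi_i\colon (x,s)\mapsto (x,s-i)$ identifies $N\times[0,\infty)$ with $N\times[-i,\infty)$. Under $\Phi_i$, the warped metric $e^{-2s}|\gamma_N'|^2+|\gamma_\R'|^2$ becomes $e^{-2u}\,e^{-2i}|\gamma_N'|^2 + |\gamma_\R'|^2$. Hence $(W(N),w_i)$ is isometric to the pointed space $(N\times[-i,\infty), (e,0))$, carrying the warped metric with warping $e^{-u}$ over the base metric space $(N, e^{-i}d)$.

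By Theorem~\ref{thm:pansu_results}, $(N,e^{-i}d,e)$ converges in the pointed Gromov--Hausdorff sense to $(\widehat N,\widehat d, e)$. We also need convergence of the length structures. Pansu's proof gives more than convergence of metrics: the rescaled metrics $\delta_{e^{i}}^*(e^{-i}d)$, pulled back to the graded model via the dilations $\delta_{\lambda}$, converge to $\widehat d$ uniformly on compact sets. Since $d$ is a length metric, each of these is a length metric on the same underlying manifold.

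The core step is a lemma. Suppose $d_i$ are length metrics on a fixed space $M$ converging uniformly on compacta to a length metric $d_\infty$. Then the warped products over $(M,d_i)$ restricted to $M\times[-i,\infty)$ converge uniformly on compacta to the warped product over $(M,d_\infty)$ on $M\times\R$. Pointed Gromov--Hausdorff convergence then follows by taking the identity maps as approximations on $R$-balls.

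To prove the lemma, we compare lengths of curves. On a ball of radius $R$ about $(e,0)$, relevant curves stay in the height window $[-R',R']$, and their $N$-projections stay in a compact set. Truncation to $[-i,\infty)$ is therefore irrelevant once $i>R'$. We then use a characterization of warped distance through the base distance alone. For a length base space $(M,d)$, the warped distance between $(x,s)$ and $(y,t)$ depends only on $d(x,y)$, $s$ and $t$: it equals the distance in the hyperbolic-plane-type model $\R\times\R$ with metric $e^{-2u}dx^2+du^2$ between $(0,s)$ and $(d(x,y),t)$. This is the standard fact (Chen, or Alexander--Bishop) that warped products over length spaces are computed through geodesics of the 2-dimensional model.

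Given this formula, $d_{W}^{(i)}\big((x,s),(y,t)\big) = F\big(d_i(x,y),s,t\big)$ with $F$ continuous. Uniform convergence $d_i\to d_\infty$ on compacta then gives uniform convergence of the warped metrics on compacta. Finally, we identify the limit, the warped product of $(\widehat N,\widehat d)$ with warping $e^{-t}$, with $S(N)$ carrying its warped metric, as in Notation~\ref{nota:Heintze_group}. The basepoint is $Id_{S(N)}$.

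The main obstacle is the 2-dimensional reduction formula for warped distance over a general length (sub-Riemannian) base. I would prove it directly. The upper bound comes from concatenating a base geodesic (or near-geodesic) with the planar model path. The lower bound comes from projecting any curve $\gamma$ to the model by $t\mapsto(\ell_d(\gamma_N|_{[a,t]}),\gamma_\R(t))$, which does not increase length. A fallback avoiding this formula is to pass to an ultralimit, but the projection argument seems cleanest.
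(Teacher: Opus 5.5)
Your proof is correct, but it uses a different mechanism from the paper's.

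\textbf{The paper's route.} It uses only pointed Gromov--Hausdorff convergence of the bases. It places $B_R(N,e^{-i}d)$ and $B_R(\widehat N,\widehat d)$ at Hausdorff distance $<\varepsilon$ in a common space $Z_i$ and takes the warped product over $Z_i$. It then asserts that the two warped products sit in it isometrically and $\varepsilon$-close. Separately, using $R=e^{2r}r$, it shows that an $r$-ball at height $t$ lies over a base ball of radius $R$, above height $t-r$.

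\textbf{Your route.} You normalize by isometries: left translation, and the vertical shift that turns $(W(N),w_i)$ into the warped product over $(N,e^{-i}d)$ based at height $0$. You invoke the sharper form of Pansu's theorem, namely uniform convergence on compacta of the dilated metrics on one manifold. You then reduce everything to the fiber-independence formula $d_W\bigl((x,s),(y,t)\bigr)=F\bigl(d(x,y),s,t\bigr)$, where $\cosh F=1+\frac{d(x,y)^2+(e^s-e^t)^2}{2e^{s+t}}$ is the upper half-plane distance. Both the convergence and the uniform localization of balls then follow from continuity and monotonicity of $F$.

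\textbf{What each buys.} Your route is more rigorous exactly where the paper is thin. The space $Z_i$ is in general not a length space; in the usual disjoint-union realization it is not even path connected. Balls with the induced metric need not be length spaces either. So the path-length warped product of Definition~\ref{defn:warped_prod} does not literally make sense there. Defining the warped distance on $Z_i$ by $F(d_{Z_i},s,t)$ is what makes the paper's step valid, and your formula shows this agrees with the true warped metrics on $W(N)$ and on $\widehat N\rtimes_\alpha\R$.

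Conversely, once you have $F$ you do not need the strengthened Pansu statement. Let $\phi_i$ be an $\varepsilon_i$-approximation between base balls whose radius is dictated by $F$. Then $(x,s)\mapsto(\phi_i(x),s)$ approximates the warped $r$-balls, with error controlled by the uniform continuity of $F$ on a compact range. So Theorem~\ref{thm:pansu_results} as stated already suffices.

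\textbf{Details to make explicit in the write-up.}
\begin{enumerate}
\item In the lower bound, the projected curve ends at $(L,t)$ with $L=\ell_d(\gamma_N)\ge d(x,y)$. You therefore need the model distance to be nondecreasing in $L\ge 0$; this is clear from the formula for $\cosh F$.
\item Truncating heights to $[-i,\infty)$ does not change $F$. This follows from convexity of the horoball $\{u\ge -i\}$ in the model, or from your localization argument.
\item For the identity maps to serve as approximations, the $d_i$-balls of fixed radius about $e$ must lie in one compact set for all large $i$. This follows from uniform convergence on compacta together with the fact that the $d_i$ are length metrics.
\end{enumerate}
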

    \begin{proof}
        Let $N$ be a connected, simply connected nilpotent Lie group, and let $d$ be a left-invariant metric on $N$. By Theorem~\ref{thm:pansu_results}, the metric spaces $(N, e^{-i}d)$ converge to the asymptotic cone $(\widehat{N},\widehat{d})$ of $N$ in the sense of pointed Gromov--Hausdorff convergence.

        Fix a basepoint $x \in N$, and let $w_i = (x,i) \in W(N)$.  Since $N$ is a group and the metric $d$ is left-invariant, it suffices to show that for any $r>0$ the ball $B_r(w_i) \subseteq W(N)$ converges to $B_r(Id_{S(N)}) \subseteq \widehat{N} \rtimes_{\alpha} \R$ in the Gromov-Hausdorff sense.  Note that because $\widehat{N} \rtimes_{\alpha} \R$ is a group with a left-invariant metric, there is only one such ball up to isometry.

        Let $R = e^{2r} \cdot r$, and let $\varepsilon>0$.  By Theorem~\ref{thm:pansu_results}, there exists $n\in \N$ so that for $i \geq n$ $$d_{GH}\bigl(B_R(N,e^{-i}d),B_R(\widehat{N},\widehat{d})\bigr) < \varepsilon.$$
        Thus, there is a metric space $Z_i = B_R(N,e^{-i}d) \cup B_R(\widehat{N},\widehat{d})$ in which these balls are at Hausdorff distance less than $ \varepsilon$.

    Consider the warped product $W_i = W(Z_i) \cong Z_i \rtimes [0,\infty)$ as in Definition~\ref{defn:warped_prod}.   The warped products $W(B_R(N,e^{-i}d))$ and  $W(B_R(\widehat{N},\widehat{d}))$ are isometrically embedded in $W_i$, and they are at Hausdorff distance less than~$\varepsilon$ in $W_i$. Thus, the warped products $W(B_R(N,e^{-i}d))$ converge to the warped product $W(B_R(\widehat{N},\widehat{d}))$.

        Finally, let $t > r$ and let $(x,t) \in W(N)$.  Let $B_R(N,e^{t-r}d)$ denote the ball of radius $R$ about $x$ in $(N,e^{t-r}d)$.  The choice of $R$ is so that the ball of radius $r$ about $(x,t)$ in $W(N)$ is contained in $B_R(N,e^{t-r}d) \times [t-r,\infty)$.
        To see this, let $B_r(x,t)$ be this ball in $W(N)$.  The map $h : W(N) \to [0,\infty)$ given by projection to the second factor is distance non-increasing, so $B_r(x,t) \subseteq h^{-1}\left( [t-r,\infty) \right)$.  Suppose that $(x',t') \in B_r(x,t)$.  Take a path between $(x,t)$ and $(x',t')$ and project it to $N \times \{t_{\max}\}$, where $t _{\max} = \max \{t,t'\}$.  The projection does not increase the length of this path, so the distance between $(x,t_{\max})$ and $(x',t_{\max})$ in the metric $e^{-t_{\max}}d$ on $N \times \{ t_{\max} \}$ is at most $r$.  Since $t_{\max} \le t + r$, it follows that the distance between $(x,t-r)$ and $(x',t-r)$ in the metric $e^{t-r}d$ on $N \times \{ t-r \}$ is at most $e^{2r} \cdot r = R$.  This proves the assertion about the ball of radius $r$ being contained in $B_R(N,e^{t-r}d) \times [t-r,\infty)$ and completes the proof.
    \end{proof}

    We will use the following corollary to Proposition \ref{prop:GH_convergence}.

    \begin{corollary} \label{cor:QI_to_S}
        Let $N$ be a connected, simply connected nilpotent Lie group, and let $S(N) = \widehat{N} \rtimes_{\alpha} \R$ be the associated solvable group.
        Let $p \in N$, and let $w_i = (p,i) \in W(N)$ for $i \in \N$. There exist constants $K \geq 1$ and $C \geq 0$ so that for all $R>0$ there exists $k = k(R) \in \N$ and a $(K,C)$-quasi-isometry $h_R: B_R(w_k) \rightarrow B_R(Id_{S(N)})$ with $h_R(w_k) = Id_{S(N)}$. \qed
    \end{corollary}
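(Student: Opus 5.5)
The plan is to convert the pointed Gromov--Hausdorff convergence of Proposition~\ref{prop:GH_convergence} into uniform quasi-isometries between balls. First, using left-invariance of the metric on $N$, I reduce to the case $p$ equal to the basepoint $x$ used in the proof of Proposition~\ref{prop:GH_convergence}. Indeed, left translation by $px^{-1}$ on the $N$-factor is an isometry of $W(N)$, since the warped metric only involves $|\gamma_N'|$ and $|\gamma_{\R}'|$. It carries $(x,i)$ to $(p,i)$.

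Next I fix $R>0$. By Proposition~\ref{prop:GH_convergence}, the closed balls $\overline{B}_R(w_i)$ converge in the Gromov--Hausdorff sense to $\overline{B}_R(Id_{S(N)})$. I therefore choose $k=k(R)$ so that the Gromov--Hausdorff distance between these two balls is less than $1$. By definition, there is then a metric space $M$ containing isometric copies of both balls at Hausdorff distance less than $1$. I define $h_R$ by sending each point $y\in B_R(w_k)$ to a point of $B_R(Id_{S(N)})$ within distance $1$ of $y$ in $M$. For the basepoint, I set $h_R(w_k)=Id_{S(N)}$ outright.

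To justify that last choice, I need the basepoints to correspond, and for this I use the pointed version of the convergence. Pointed Gromov--Hausdorff convergence can be taken to provide embeddings in which the basepoints are at distance less than $\varepsilon$. If it is only given in the unpointed-ball form, as in the definition, I handle it as follows. Both balls come from homogeneous spaces, and the construction in the proof of Proposition~\ref{prop:GH_convergence} visibly matches $(x,i)$ with $Id_{S(N)}$: the warped products $W(B_R(N,e^{-i}d))$ and $W(B_R(\widehat N,\widehat d))$ sit inside $W(Z_i)$ with matched centres. In either case, redefining $h_R$ at the basepoint changes distances by at most a bounded amount.

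Then I check the quasi-isometry inequalities. For $y,y'$ in the domain ball, the triangle inequality in $M$ gives
\[ \bigl|d(h_R(y),h_R(y'))-d(y,y')\bigr|\le 2+2 , \]
where the extra $2$ accounts for the basepoint adjustment. For quasi-surjectivity, every point of $B_R(Id_{S(N)})$ is within $1$ of some $y$, and hence within $2$ of $h_R(y)$. So $h_R$ is a $(1,4)$-quasi-isometry, and $K=1$, $C=4$ work uniformly in $R$.

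The main obstacle is bookkeeping with open versus closed balls and the precise meaning of "$R$-ball" in the convergence. A point near the boundary of the domain ball might only be matched to a point slightly outside the target ball. I would handle this by applying the convergence at radius $R+2$ and then projecting along a geodesic to the centre, which is possible since both spaces are geodesic. This moves each image point by at most $2$ and preserves the constants up to an additive change. All of this is routine, so the corollary follows immediately from Proposition~\ref{prop:GH_convergence}.
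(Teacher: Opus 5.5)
Your proposal is correct and takes the same route as the paper, which gives no separate argument and treats the corollary as an immediate consequence of Proposition~\ref{prop:GH_convergence}: you spell out that closeness of the $R$-balls in the Gromov--Hausdorff sense yields $(1,C)$-quasi-isometries with $C$ independent of $R$, with the basepoint matching coming from the pointed form of the convergence. The translation to $p=x$ and the radius-$(R+2)$ projection are harmless but not needed, since the proposition already allows arbitrary $n_i$, and choosing $h_R(y)$ in the embedded target ball automatically keeps the image inside $B_R(Id_{S(N)})$.
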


    \section{Quasi-isometry of spaces} \label{sec:QI}

    \begin{thm} \label{thm:QI_to_Heintze}
         Let $X$ be a proper metric space. If $X$ is quasi-isometric to a finitely generated group and to a space with a horoball over an infinite group, then $X$ is quasi-isometric to a solvable Lie group $\widehat{N} \rtimes_{\alpha} \R$, where $(\widehat{N},\widehat{d})$ is the asymptotic cone of a finitely generated nilpotent group equipped with a Carnot--Carath\'{e}odory metric, and $\alpha$ acts on $(\widehat{N},\widehat{d})$ by dilation.
    \end{thm}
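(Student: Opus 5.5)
By Lemma~\ref{lem:must_be_nilpotent}, the horoball is over a finitely generated virtually nilpotent group $\Lambda$. By Remark~\ref{rem:QI_to_warped}, after replacing it by a quasi-isometric warped horoball, we may take it to be $W(N)$, where $N$ is the Malcev completion of a torsion-free finite-index subgroup of $\Lambda$. Write $Y$ for the space with a horoball, $f\colon X\to Y$ and $g\colon \Gamma\to X$ for quasi-isometries, and $\Gamma$ for the finitely generated group. The plan is to transport the large balls of $S(N)=\widehat{N}\rtimes_\alpha\R$ that appear deep in the horoball into $\Gamma$, use the transitive action of $\Gamma$ on itself to recentre them at the identity, and then take a limit. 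This produces a quasi-isometry $\Gamma\to S(N)$, hence $X\to S(N)$.

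First I fix uniform balls. By Corollary~\ref{cor:QI_to_S}, for each $R$ there are $k(R)$ and a $(K,C)$-quasi-isometry $h_R\colon B_R(w_{k})\to B_R(Id_{S(N)})$ with $K,C$ independent of $R$. By Remark~\ref{rem:balls_the_same}, after enlarging $k(R)$ if necessary, $B_R(w_k)$ is isometric to a ball in $Y$. Composing with a quasi-inverse of $f\circ g$ gives a map $\phi_R$ from $B_R(Id_{S(N)})$ to a neighbourhood of a ball in $\Gamma$. Its radius is comparable to $R$, it is centred at some $\gamma_R$, and its quasi-isometry constants are independent of $R$. Left-translating by $\gamma_R^{-1}$, which is an isometry of $\Gamma$, I obtain uniform quasi-isometric embeddings $\psi_R\colon B_R(Id_{S(N)})\to\Gamma$ with $\psi_R(Id)$ within bounded distance of $e$. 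Their images contain balls $B_{R/K'-C'}(e)$.

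Next I take a limit. Restricting to a fixed ball $B_r(Id_{S(N)})$, the maps $\psi_R$ (for $R\ge r$) take values in a fixed finite ball of $\Gamma$. They are coarsely Lipschitz, so after discretising $S(N)$ by a net, a diagonal Arzel\`a--Ascoli argument applies. Since $\Gamma$ is locally finite, I pass to a subsequence $R_j\to\infty$ such that the maps $\psi_{R_j}$ eventually agree on every finite subset of the net. The limit $\psi\colon S(N)\to\Gamma$ is then a $(K',C')$-quasi-isometric embedding. It is coarsely surjective because each $B_\rho(e)$ lies, up to bounded distance, in the image of $\psi_{R_j}$ restricted to a ball of radius depending only on $\rho$. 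Hence $\psi$ is a quasi-isometry, and $X$ is quasi-isometric to $S(N)$. The identification of $\widehat{N}$ with the asymptotic cone, together with dilation by $\alpha$, comes from Theorem~\ref{thm:pansu_results}.

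The main obstacle is the uniformity bookkeeping. The constants must be independent of $R$ through every composition: the ball isometry in Remark~\ref{rem:balls_the_same}, the passage from a ball in $Y$ to a ball in $X$ and then in $\Gamma$ (quasi-isometries send balls to sets sandwiched between balls), and coarse surjectivity onto balls of comparable radius. One subtlety is that the intrinsic metric on $B_R(w_k)$ must agree with the ambient metric, at least up to uniform constants. Since $W(N)$ is a length space, I would handle this by working with balls of radius $2R$ and restricting to radius $R$.
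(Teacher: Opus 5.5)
Your proposal is correct and is essentially the paper's argument. As in the paper, Corollary~\ref{cor:QI_to_S} and Remark~\ref{rem:balls_the_same} supply uniformly quasi-isometric balls deep in the horoball, the action of $\Gamma$ on itself recentres them, and a diagonal argument over a net yields the global quasi-isometry. The one difference is direction: you embed balls of $S(N)$ into $\Gamma$ and translate in the target, whereas the paper translates balls of $X$ deep into the horoball and maps them to $S(N)$; your version gets the finiteness needed for the diagonal argument directly from local finiteness of $\Gamma$, instead of snapping images into the net $\cS$.
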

    \begin{proof}
        Let $X$ be a proper metric space. We may suppose that $X$ is the vertex set of a Cayley graph of a finitely generated group $\Gamma$ constructed with respect to a finite generating set and $X$ is equipped with the induced metric. Let $Y$ be a space with a horoball $\cH(\Lambda)$, and let $i:\cH(\Lambda) \rightarrow Y$ be the inclusion. Let $\phi:X \rightarrow Y$ be a quasi-isometry.  There exists a quasi-isometry $\psi:\cH(\Lambda) \rightarrow W(N)$, where $N$ is a connected, simply connected nilpotent Lie group and $W(N)$ is the warped product by Lemma~\ref{lem:must_be_nilpotent} and Remark~\ref{rem:QI_to_warped}. Let $\widehat{N} \rtimes_{\alpha} \R$ denote the associated solvable group as in Notation~\ref{nota:Heintze_group}.

        Fix $x \in X$, and let $B_r(x)$ denote the closed ball of radius $r$ about $x$. We define a sequence of $(K,C)$-quasi-isometric embeddings $f_r:B_r(x) \rightarrow \widehat{N} \rtimes_{\alpha} \R$ for fixed $K \geq 1$ and $C \geq 0$ that has a subsequence that converges to a quasi-isometry from $X$ to $\widehat{N} \rtimes_{\alpha} \R$. To do this, we use the action of the group $\Gamma$ to move the ball $B_r(x)$ in $X$ so that its image under the composition of quasi-isometries $\psi\circ \phi$ is high enough in the warped product $W(N)$ to yield a quasi-isometric embedding into the solvable group $\widehat{N} \rtimes_{\alpha} \R$ via Corollary~\ref{cor:QI_to_S}. The details are as follows.

        Fix $p \in N$, and let $w_i = (p,i) \in W(N)$ for $i \in \N$. By Corollary~\ref{cor:QI_to_S} there exist constants $K_0 \geq 1$ and $C_0 \geq 0$ so that for all $R > 0$ there exists $k=k(R) \in \N$ and a $(K_0, C_0)$-quasi-isometry $h_R:B_R(w_k) \rightarrow B_R(Id_{S(N)}) \subseteq \widehat{N} \rtimes_{\alpha} \R$ with $h_R(w_k) = Id_{S(N)}$.

    Let $K_1 \geq 1$ and $C_1 \geq 0$ be the maximum of the quasi-isometry constants of the maps $\phi$ and $\psi$ defined above. We may also suppose that $C_1$ is large enough so that $\bigcup_{g \in \Gamma} B_{C_1}(gx) = X$.  There exists $D = D(K_1, C_1) \in \R$ so that for all $w \in W(N)$ there exists $g \in \Gamma$ so that $d\bigl( w, \psi(\phi(gx))\bigr) \leq D$.  Suppose $x' \in X$ and $B=B_r(x')$ is a ball of radius $r$ so that the image $\phi(B)$ is contained in $i(\cH(\Lambda)) \subseteq Y$ and is high enough in the glued horoball $i(\cH(\Lambda))$ to be contained in a subset isometric to a subset of $\cH(\Lambda)$; such a height exists by Remark~\ref{rem:balls_the_same}. Then, there exists $D' = D'(K_1, C_1, r) \in \R$ so that if $\psi(\phi(B)) \subseteq W(N)$ and $x_1,x_2 \in B$, then $\psi(\phi(B)) \subseteq B_{D'}(\psi(\phi(x')))$ and the geodesic in $W(N)$ between the images $\psi(\phi(x_1))$ and $\psi(\phi(x_2))$ is contained in this ball $B_{D'}(\psi(\phi(x')))$. Let $R = D+D'$ and $k = k(R) \in \N$. We may assume $k$ is sufficiently large so that there exists $g_r \in \Gamma$ so that $(\psi \circ \phi \circ g_r)(B_r(x)) \subseteq B_R(w_k) \subseteq W(N)$ and $\phi\bigl(g_r(B_r(x))\bigr)$ is contained in $i(\cH(\Lambda)) \subseteq Y$ and is high enough in the glued horoball $i(\cH(\Lambda))$ to be contained in a subset isometric to a subset of $\cH(\Lambda)$.

        Replace $\psi\circ \phi \circ g_r$ if necessary to assume $x$ is mapped to $w_k$. That is, let $\rho_r:B_r(x) \rightarrow B_R(w_k)$ be given by $\rho_r(x') = (\psi\circ \phi \circ g_r)(x')$ for all $x' \neq x$ and $\rho_r(x) = w_k$. Then, $\rho_r$ is still a uniform quasi-isometric embedding of $B_r(x)$ to $B_R(w_k)$ and has the additional property that $\rho_r(x) = w_k$.
        Therefore, there exist $K \geq 1$ and $C \geq 0$ so that the map
        \begin{equation} \label{eqn:QI}
            f_r:= h_R \circ \rho_r:B_r(x) \rightarrow B_R(Id_{S(N)}) \subseteq \widehat{N} \rtimes_{\alpha} \R
        \end{equation}
        is a $(K,C)$-quasi-isometric embedding with $f_r(x) = Id_{S(N)}$.

        Adjust these maps $\{f_r\,|\, r \in \N\}$ so that images lie in a net in $\widehat{N} \rtimes_{\alpha} \R$ and obtain a subsequence that converges. Indeed, let $\cS$ be a net in $\widehat{N} \rtimes_{\alpha} \R$, meaning that there exists $\delta>0$ so that $\widehat{N} \rtimes_{\alpha} \R = \bigcup_{s \in \cS} B_\delta(s)$ and $d(s,s') \geq \delta$ for all $s,s' \in \cS$ with $s\neq s'$. Suppose $Id_{S(N)} \in \cS$. We may assume that the image of $f_r$ is contained in $\cS$ for each $r$. For each $r$, there are finitely many $(K,C)$-quasi-isometric embeddings $f$ from $B_r(x)$ to $\cS \subseteq \widehat{N} \rtimes_{\alpha} \R$ with $f(x) = Id_{S(N)}$ since the intersection of $\cS$ and $B_{Kr+C}(Id_{S(N)})$ has finitely many points. Thus, a diagonal argument implies that a subsequence of the maps $f_r$ converges to a map $f:X \rightarrow \widehat{N} \rtimes_{\alpha} \R$ with $f(x) = Id_{S(N)}$.

        The map $f$ is a quasi-isometric embedding since each map $f_r$ is a $(K,C)$-quasi-isometric embedding.
        The map $f$ is almost onto since the maps in the definition of $f_r$ are quasi-isometries, and if $\rho:Z \rightarrow Z'$ is a $(K,C)$-quasi-isometry, then $\rho|_{B_{Kr+KC(z)}}$ is almost onto $B_{r-C}(\rho(z))$. Therefore, $f$ is a quasi-isometry, and $X$ and $\widehat{N} \rtimes_{\alpha} \R$ are quasi-isometric.
  \end{proof}

    We now prove our main Theorem~\ref{thm:intro_main} using a theorem of Dymarz--Fisher--Xie~\cite{DymarzFisherXie-Tukia} and a result of Caprace--Cornulier--Monod--Tessera~\cite{CCMT} that generalizes   Hamenst\"adt~\cite[Theorem 1]{Hamenstadt-Isom}. We recall the statement here for convenience.

    \begin{thm} \label{thm:main_thm}
        Let $X'$ be a proper metric space that is quasi-isometric to a finitely generated group $\Gamma$ and to a space with a horoball.  Then:
        \begin{enumerate}
        \item \label{item:1} $X'$ is quasi-isometric to a rank-one symmetric space $X$ (or $\R$),
        \item \label{item:2} $\Gamma$ is virtually a rank--$1$ uniform lattice in the isometry group of $X$, and
        \item \label{item:3} if the space with a horoball is quasi-isometric to the cusped space of a relatively hyperbolic group pair $(H, \mathbb{P})$, then $H$ is virtually a rank--$1$ non-uniform lattice in the isometry group of~$X$.
        \end{enumerate}
    \end{thm}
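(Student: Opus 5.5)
We split according to whether the group $\Lambda$ underlying the horoball is finite or infinite. If $\Lambda$ is finite, the warped horoball $\cH(\Lambda)$ is quasi-isometric to a ray $[0,\infty)$, and a space with such a horoball quasi-isometric to a finitely generated group $\Gamma$ forces $\Gamma$ to be infinite. Since $\Gamma$ is quasi-isometric to a space containing arbitrarily large balls isometric to balls in a ray (Remark~\ref{rem:balls_the_same}), translating such balls by $\Gamma$ shows that every ball in $\Gamma$ is uniformly quasi-isometric to an interval. Hence $\Gamma$ has linear growth and is two-ended, so $X'$ is quasi-isometric to $\R$. This is the degenerate case, and in it $\Gamma$ is virtually $\mathbb{Z}$, which acts geometrically on $\R$.

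Now assume $\Lambda$ is infinite. By Theorem~\ref{thm:QI_to_Heintze}, $X'$, and hence $\Gamma$, is quasi-isometric to the Heintze group of Carnot type $S = \widehat{N} \rtimes_{\alpha} \R$. We then invoke Dymarz--Fisher--Xie~\cite[Corollary 1.2]{DymarzFisherXie-Tukia}: a finitely generated group quasi-isometric to such a Carnot-type solvable group is, up to finite index and a finite normal subgroup, a cocompact lattice in $\operatorname{Isom}(S)$, or acts geometrically on a possibly different but quasi-isometric negatively curved homogeneous space of the same Carnot type. In either case we obtain a proper geodesic hyperbolic space $S'$ quasi-isometric to $X'$ admitting a geometric group action. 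The isometry group of $S'$ also contains the simply transitive solvable group $S$, whose stabilizer of the boundary point fixed by the $\R$-factor acts cocompactly on horospheres. Thus $\operatorname{Isom}(S')$ is a locally compact group acting cocompactly on a hyperbolic space and not fixing a boundary point, yet $S'$ is not quasi-isometric to a tree, since its boundary $\widehat{N}\cup\{\infty\}$ is connected when $\widehat{N}$ is nontrivial. By Caprace--Cornulier--Monod--Tessera~\cite[Proposition 5.10]{CCMT}, or Hamenst\"adt~\cite[Theorem 1]{Hamenstadt-Isom}, a negatively curved homogeneous space of this type whose isometry group acts cocompactly and is not amenable (the cocompact lattice is non-elementary hyperbolic) must be a rank-one symmetric space $X$. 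This proves \eqref{item:1}. Item \eqref{item:2} then follows from the quasi-isometric rigidity of rank-one symmetric spaces recalled in Section~\ref{sec:symmetric}, due to Tukia, Gabai, Casson--Jungreis, Chow, and Pansu.

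For \eqref{item:3}, the cusped space $X(H,\bP)$ is quasi-isometric to $X$. So $\partial(H,\bP)$ is quasisymmetric to $\partial X$, and $H$ acts on $\partial X$ by uniformly quasiconformal, or quasi-M\"obius, maps, induced through the quasi-isometry via Bonk--Schramm. Applying the Tukia/Chow/Pansu-type conjugation theorem (the same rigidity used for \eqref{item:2}, now for uniformly quasiconformal groups that need not be cocompact) conjugates this action, modulo a finite kernel, to an action of $H$ by isometries of $X$. The geometric finiteness of the convergence action of $(H,\bP)$ on its Bowditch boundary then gives that the image is a geometrically finite discrete subgroup with full limit set $\partial X$, and therefore a lattice. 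It is non-uniform because the parabolic subgroups are infinite.

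I expect the main obstacle to be this last step. The conjugation theorems are stated for groups quasi-isometric to $X$, and for cusped spaces one needs the version for uniformly quasiconformal convergence groups, together with a check that discreteness and finite covolume transfer. In the quaternionic and Cayley cases I would use Pansu's result that every quasiconformal map of $\partial X$ is conformal, which makes this step immediate. In the real and complex cases I would rely on Tukia's and Chow's theorems for uniformly quasiconformal groups.
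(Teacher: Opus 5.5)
Your proof is correct. Item (1) and the finite case follow the paper; items (2) and (3) take different routes.

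\textbf{Finite case and item (1).} Your linear-growth argument fleshes out the paper's one-line assertion. Item (1) is the paper's argument: Theorem~\ref{thm:QI_to_Heintze}, then \cite[Corollary 1.2]{DymarzFisherXie-Tukia}, then \cite[Proposition 5.10]{CCMT}. Your explicit checks of non-amenability (via the non-elementary lattice $\Gamma$) and of the non-tree alternative are details the paper leaves implicit. Two small corrections:
\begin{itemize}
\item Dymarz--Fisher--Xie directly give a left-invariant Riemannian metric $g_0$ on $\widehat N\rtimes_\alpha\R$ with a geometric $\Gamma$-action. So your hedged ``or'' clause is unnecessary, and this is what justifies the simply transitive $S$-action you then use.
\item \cite{CCMT} yields that $G=\operatorname{Isom}(S,g_0)$, modulo a compact normal subgroup $W$, is $\operatorname{Isom}(X)$ or its identity component. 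Hence $S'$ is quasi-isometric to $X$ by \v{S}varc--Milnor, but need not equal it.
\end{itemize}

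\textbf{Item (2).} The paper needs no quasi-isometric rigidity here. $\Gamma$ is already a uniform lattice in $G$, so $\Gamma/(\Gamma\cap W)$ is a uniform lattice in $G/W$. Your appeal to Tukia, Gabai, Casson--Jungreis, Chow and Pansu is valid, but redundant once the geometric action is in hand.

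\textbf{Item (3).} The paper simply cites \cite[Theorem 1.3]{healyhruska}, while you essentially reprove that result through boundary rigidity. Your sketch works, but you should record the points that make it go through:
\begin{itemize}
\item The Tukia and Chow conjugation theorems require the conical limit set to have positive measure. This holds because the non-conical points of a geometrically finite convergence action are the countably many bounded parabolic points.
\item For $X=\Hy^2_\R$, Tukia's quasiconformal theorem does not apply; you need the convergence group theorem \cite{tukia, gabai, cassonjungreis} instead.
\item The conjugated isometric action is at bounded distance from a quasi-action quasi-conjugate to the proper action on the cusped space. Hence it has finite kernel and discrete image.
\item Bowditch's characterization of geometric finiteness (every limit point conical or bounded parabolic), together with full limit set, gives finite covolume.
\item Non-uniformity uses that some $P\in\bP$ is infinite. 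This hypothesis is implicit in item (3): for $\bP=\emptyset$ and $H$ a uniform lattice, the conclusion fails.
\end{itemize}

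The paper's route is shorter. Yours is self-contained and shows exactly where relative hyperbolicity is used.
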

    \begin{proof}
         Suppose $X'$ is a proper metric space quasi-isometric to a finitely generated group $\Gamma$ and to a space with a horoball. If this horoball is over a finite group, then $\Gamma$ is two-ended, so $X'$ is quasi-isometric to $\R$.
         So we now assume that the horoball is over an infinite group.  Then, the group $\Gamma$ is quasi-isometric to a solvable group $\widehat{N} \rtimes_{\alpha} \R$ as in Theorem~\ref{thm:QI_to_Heintze}. Therefore, by \cite[Corollary 1.2]{DymarzFisherXie-Tukia} there exists a left-invariant Riemannian metric~$g_0$ on $\widehat{N} \rtimes_{\alpha} \R$ so that $\Gamma$ acts geometrically on $(\widehat{N} \rtimes_{\alpha} \R, g_0)$. The space $(\widehat{N} \rtimes_{\alpha} \R, g_0)$ is a proper geodesic hyperbolic metric space. Thus, by \cite[Proposition 5.10]{CCMT}, if $G$ is the isometry group of $(\widehat{N} \rtimes_{\alpha} \R, g_0)$, then $G$ has a unique maximal compact normal subgroup $W$ so that $G/W$ is isomorphic to the full group of isometries of a rank-one symmetric space or its identity component. Since $\Gamma/(\Gamma \cap W)$ is virtually a uniform lattice in $G/W$, the group $\Gamma$ virtually acts geometrically on a rank-one symmetric space, proving Items~(\ref{item:1} and \ref{item:2}). Item~\ref{item:3} then follows from \cite[Theorem 1.3]{healyhruska}.
    \end{proof}

\bibliographystyle{alpha}
\bibliography{refs.bib}

\begin{thebibliography}{CCMT15}

\bibitem[BBI01]{buragoburagoivanov}
Dmitri Burago, Yuri Burago, and Sergei Ivanov.
\newblock {\em A course in metric geometry}, volume~33 of {\em Graduate Studies
  in Mathematics}.
\newblock American Mathematical Society, Providence, RI, 2001.

\bibitem[BH99]{bridsonhaefliger}
Martin~R. Bridson and Andr{\'e} Haefliger.
\newblock {\em Metric spaces of non-positive curvature}, volume 319 of {\em
  Grundlehren der Mathematischen Wissenschaften [Fundamental Principles of
  Mathematical Sciences]}.
\newblock Springer-Verlag, Berlin, 1999.

\bibitem[BHC62]{BorelHS}
Armand Borel and Harish-Chandra.
\newblock Arithmetic subgroups of algebraic groups.
\newblock {\em Ann. of Math.}, 75(3):485--535, 1962.

\bibitem[BL01]{BassLubotzky}
Hyman Bass and Alexander Lubotzky.
\newblock {\em Tree lattices}, volume 176 of {\em Progress in Mathematics}.
\newblock Birkh\"auser Boston, Inc., Boston, MA, 2001.
\newblock With appendices by Bass, L. Carbone, Lubotzky, G. Rosenberg and J.
  Tits.

\bibitem[Bow12]{bowditch12}
B.~H. Bowditch.
\newblock Relatively hyperbolic groups.
\newblock {\em Internat. J. Algebra Comput.}, 22(3):1250016, 66, 2012.

\bibitem[BS00]{bonkschramm}
M.~Bonk and O.~Schramm.
\newblock Embeddings of {Gromov} hyperbolic spaces.
\newblock {\em Geom. Funct. Anal.}, 10(2):266--306, 2000.

\bibitem[BS07]{buyaloschroeder}
Sergei Buyalo and Viktor Schroeder.
\newblock {\em Elements of asymptotic geometry}.
\newblock EMS Monographs in Mathematics. European Mathematical Society (EMS),
  Z\"{u}rich, 2007.

\bibitem[CC92]{cannoncooper92}
J.~W. Cannon and Daryl Cooper.
\newblock A characterization of cocompact hyperbolic and finite-volume
  hyperbolic groups in dimension three.
\newblock {\em Trans. Amer. Math. Soc.}, 330(1):419--431, 1992.

\bibitem[CCMT15]{CCMT}
Pierre-Emmanuel Caprace, Yves Cornulier, Nicolas Monod, and Romain Tessera.
\newblock Amenable hyperbolic groups.
\newblock {\em J. Eur. Math. Soc. (JEMS)}, 17(11):2903--2947, 2015.

\bibitem[Cho96]{Chow96}
Richard Chow.
\newblock Groups quasi-isometric to complex hyperbolic space.
\newblock {\em Trans. Amer. Math. Soc.}, 348(5):1757--1769, 1996.

\bibitem[CJ94]{cassonjungreis}
Andrew Casson and Douglas Jungreis.
\newblock Convergence groups and {S}eifert fibered {$3$}-manifolds.
\newblock {\em Invent. Math.}, 118(3):441--456, 1994.

\bibitem[DFX23]{DymarzFisherXie-Tukia}
Tullia Dymarz, David Fisher, and Xiangdong Xie.
\newblock A fibered {T}ukia theorem for nilpotent {L}ie groups.
\newblock {\em Ann. Fenn. Math.}, 48(2):653--680, 2023.

\bibitem[DY05]{dahmaniyaman}
Fran\c{c}ois Dahmani and Asl\i Yaman.
\newblock Bounded geometry in relatively hyperbolic groups.
\newblock {\em New York J. Math.}, 11:89--95, 2005.

\bibitem[Gab92]{gabai}
David Gabai.
\newblock Convergence groups are {F}uchsian groups.
\newblock {\em Ann. of Math. (2)}, 136(3):447--510, 1992.

\bibitem[GM08]{grovesmanning}
Daniel Groves and Jason~Fox Manning.
\newblock Dehn filling in relatively hyperbolic groups.
\newblock {\em Israel J. Math.}, 168:317--429, 2008.

\bibitem[GMS19]{grovesmanningsisto}
Daniel Groves, Jason~Fox Manning, and Alessandro Sisto.
\newblock Boundaries of {D}ehn fillings.
\newblock {\em Geom. Topol.}, 23(6):2929--3002, 2019.

\bibitem[Gro81]{gromov-poly}
Mikhael Gromov.
\newblock Groups of polynomial growth and expanding maps.
\newblock {\em Inst. Hautes \'Etudes Sci. Publ. Math.}, (53):53--73, 1981.

\bibitem[Ham09]{Hamenstadt-Isom}
Ursula Hamenst\"adt.
\newblock Isometry groups of proper hyperbolic spaces.
\newblock {\em Geom. Funct. Anal.}, 19(1):170--205, 2009.

\bibitem[Hei74]{Heintze}
Ernst Heintze.
\newblock On homogeneous manifolds of negative curvature.
\newblock {\em Math. Ann.}, 211:23--34, 1974.

\bibitem[Hei01]{heinonen-Lectures}
Juha Heinonen.
\newblock {\em Lectures on analysis on metric spaces}.
\newblock Universitext. New York, NY: Springer, 2001.

\bibitem[HH]{healyhruska}
Burns Healy and G.~Christopher Hruska.
\newblock Cusped spaces and quasi-isometries of relatively hyperbolic groups.
\newblock arXiv:2010.09876.

\bibitem[Hru10]{hruska10}
G.~Christopher Hruska.
\newblock Relative hyperbolicity and relative quasiconvexity for countable
  groups.
\newblock {\em Algebr. Geom. Topol.}, 10(3):1807--1856, 2010.

\bibitem[KK00]{kapovichkleiner}
Michael Kapovich and Bruce Kleiner.
\newblock Hyperbolic groups with low-dimensional boundary.
\newblock {\em Ann. Sci. \'Ecole Norm. Sup. (4)}, 33(5):647--669, 2000.

\bibitem[LD17]{ledonne}
Enrico Le~Donne.
\newblock A primer on {C}arnot groups: homogenous groups,
  {C}arnot-{C}arath\'eodory spaces, and regularity of their isometries.
\newblock {\em Anal. Geom. Metr. Spaces}, 5(1):116--137, 2017.

\bibitem[LD25]{LeDonne-book}
Enrico Le~Donne.
\newblock {\em Metric {L}ie groups---{C}arnot-{C}arath\'eodory spaces from the
  homogeneous viewpoint}, volume 306 of {\em Graduate Texts in Mathematics}.
\newblock Springer, Cham, [2025] \copyright 2025.

\bibitem[Lub95]{treelattices}
Alexander Lubotzky.
\newblock Tree-lattices and lattices in {L}ie groups.
\newblock In {\em Combinatorial and geometric group theory ({E}dinburgh,
  1993)}, volume 204 of {\em London Math. Soc. Lecture Note Ser.}, pages
  217--232. Cambridge Univ. Press, Cambridge, 1995.

\bibitem[Mos73]{Mostow73}
G.~D. Mostow.
\newblock {\em Strong rigidity of locally symmetric spaces}.
\newblock Princeton University Press, Princeton, N.J.; University of Tokyo
  Press, Tokyo, 1973.
\newblock Annals of Mathematics Studies, No. 78.

\bibitem[MS20]{mackaysisto}
John~M. Mackay and Alessandro Sisto.
\newblock Quasi-hyperbolic planes in relatively hyperbolic groups.
\newblock {\em Ann. Acad. Sci. Fenn., Math.}, 45(1):139--174, 2020.

\bibitem[Pan83]{pansu_cones}
Pierre Pansu.
\newblock Croissance des boules et des g\'eod\'esiques ferm\'ees dans les
  nilvari\'et\'es.
\newblock {\em Ergodic Theory Dynam. Systems}, 3(3):415--445, 1983.

\bibitem[Pan89]{Pansu89}
Pierre Pansu.
\newblock M\'{e}triques de {C}arnot-{C}arath\'{e}odory et quasiisom\'{e}tries
  des espaces sym\'{e}triques de rang un.
\newblock {\em Ann. of Math. (2)}, 129(1):1--60, 1989.

\bibitem[Seg83]{segal}
Daniel Segal.
\newblock {\em Polycyclic groups}, volume~82 of {\em Cambridge Tracts in
  Mathematics}.
\newblock Cambridge University Press, Cambridge, 1983.

\bibitem[Tuk86]{Tukia-quasiconformal}
Pekka Tukia.
\newblock On quasiconformal groups.
\newblock {\em J. Analyse Math.}, 46:318--346, 1986.

\bibitem[Tuk88]{tukia}
Pekka Tukia.
\newblock Homeomorphic conjugates of {F}uchsian groups.
\newblock {\em J. Reine Angew. Math.}, 391:1--54, 1988.

\end{thebibliography}

\end{document}